\pgfplotsset{compat=1.18, width = 10 cm} 
\newcounter{myequation}[subsection]
\renewcommand{\themyequation}{\thesubsection.\alph{myequation}}
\newcommand{\myeqn}[1]{\refstepcounter{myequation}\tag{\themyequation} #1}
\preto\subsection{\setcounter{myequation}{0}}
\newcommand{\suchthat}{\;\ifnum\currentgrouptype=16 \middle\fi|\;}
\newcommand{\heart}{\ensuremath\heartsuit}
\newcommand{\ds}{\displaystyle} 
\newcommand{\mb}{\mathbb}
\newcommand{\mcO}{ \mathcal{O} }
\newcommand{\bfF}{ {\bf F} }
\newcommand{\bfV}{ {\bf V} }
\newcommand{\bfG}{ {\bf G} }
\newcommand{\bfU}{ {\bf U} }
\newcommand{\bfI}{ {\bf I} }
\newcommand{\bfi}{ {\bf i}}
\newcommand{\bfW}{ {\bf W} }
\newcommand{\bfT}{ {\bf T} }
\newcommand{\bfS}{ {\bf S} }
\newcommand{\bfE}{ {\bf E} }
\newcommand{\bfip}{ {\bf i_+}}
\newcommand{\bfim}{ {\bf i_-}}
\newcommand{\bfj}{ {\bf j}}
\newcommand{\ev}{ {\bf E_{V, \Omega}}}
\newcommand{\evF}{ {\bf E_{V, \Omega}^{F}}}
\newcommand{\evh}{ {\bf E_{V, \Omega}^{\heart}}}
\newcommand{\evhF}{ {\bf E_{V, \Omega}^{\heart, F}}}
\newcommand{\evcF}{ {\bf E_{V, \Omega}^{c, F}}}
\newcommand{\evhat} { {\bf E_{\widehat{V}, \widehat{\Omega}}}}
\newcommand{\evhatF}{ {\bf E_{\widehat{V}, \widehat{\Omega}}^{\widehat{F}}}}
\newcommand{\hvO}{ {\bf H}_{\nu, \Omega}}
\newcommand{\hO}{ {\bf H}_{\Omega}}
\newcommand{\hvOh}{ {\bf H}_{\nu, \Omega}^{\heart}}
\newcommand{\hvOhat}{ {\bf H}_{\nu, \widehat{\Omega}}}
\newcommand{\hOh}{ {\bf H}_{\Omega}^{\heart}}
\newcommand{\hOhat}{ {\bf H}_{\widehat{\Omega}}}
\newcommand{\hvOc}{ {\bf H}_{\nu, \Omega}^c}
\newcommand{\hOc}{ {\bf H}_{\Omega}^c}
\newcommand{\hOIhat}{ {\bf H}_{\Omega}^{\widehat{I}}}
\newcommand{\ew}{ {\bf E_{W, \Omega}}}
\newcommand{\ewF}{ {\bf E_{W, \Omega}^{F}}}
\newcommand{\ewhF}{ {\bf E_{W, \Omega}^{\heart, F}}}
\newcommand{\ewcF}{ {\bf E_{W, \Omega}^{c, F}}}
\newcommand{\ewhatF}{ {\bf E_{\widehat{W}, \widehat{\Omega}}^{\widehat{F}}}}
\newcommand{\hwO}{ {\bf H}_{\omega, \Omega}}
\newcommand{\et}{ {\bf E_{T, \Omega}}}
\newcommand{\etF}{ {\bf E_{T, \Omega}^{F}}}
\newcommand{\etcF}{ {\bf E_{T, \Omega}^{c, F}}}
\newcommand{\ethF}{ {\bf E_{T, \Omega}^{\heart, F}}}
\newcommand{\ethatF}{ {\bf E_{\widehat{T}, \widehat{\Omega}}^{\widehat{F}}}}
\newcommand{\htO}{ {\bf H}_{\tau, \Omega}}
\newcommand{\gv}{ {\bf G_{V}}}
\newcommand{\gvF}{ {\bf G_{V}^F}}
\newcommand{\gvhat}{ {\bf G_{\widehat{V}}}}
\newcommand{\gvhatF} { {\bf G_{\widehat{V}}^{\widehat{F}}}}
\newcommand{\gt}{ {\bf G_{T}}}
\newcommand{\gtF}{ {\bf G_{T}^F}}
\newcommand{\gthatF} { {\bf G_{\widehat{T}}^{\widehat{F}}}}
\newcommand{\gw}{ {\bf G_{W}}}
\newcommand{\gwF}{ {\bf G_{W}^F}}
\newcommand{\gwhatF} { {\bf G_{\widehat{W}}^{\widehat{F}}}}
\newcommand{\la}{\langle}
\newcommand{\ra}{\rangle}
\newcommand{\whI}{\widehat{I}}
\newtheorem{theorem}{Theorem}[subsection] 
\newtheorem{definition}[theorem]{Definition} 
\newtheorem{lemma}[theorem]{Lemma} 
\newtheorem{proposition}[theorem]{Proposition}
\newtheorem{example}[theorem]{Example}
\newtheorem{thrm}{Theorem}
\title[Hall algebras and edge contractions with loops]{Hall algebras and edge contractions with loops}
\author{Adhish Rele}
\address{Department of Mathematics\\ University at Buffalo\\ The State University of New York  \\Buffalo, NY 14260}
\email{adhishpr@buffalo.edu}
\date{\today}
\begin{document}
\pagenumbering{gobble}

\begin{abstract}
We extend the study of Hall algebras and edge contractions by generalizing Yiqiang Li’s work to contraction along vertices with multiple edges. Using the edge contractions, we establish new embeddings among Hall algebras in this broader setting. Our results demonstrate that these embeddings preserve key algebraic structures, including Hopf algebra operations. 
\end{abstract}

\maketitle

\section*{Introduction}
In 1900, while serving as a Privatdozent at Technische Hochschule Berlin Charlottenburg, Steinitz presented a talk at the Deutsche Mathematiker-Vereinigung meeting in Aachen \cite{Ste01}, where he introduced an algebra over the integers based on isomorphism classes of finite abelian groups, now known as the Hall algebra. He also proposed several conjectures that were later proven by Phillip Hall in the 1950s  \cite{Hall59} in the context of finite abelian p-groups. It has since emerged as a fundamental structure in representation theory, particularly in the study of quantum groups and geometric representation theory. The connection between Hall algebras and quantum groups was established by Ringel \cite{Rin90}, who extended the concept to more general abelian categories, such as the category of representations of quivers. He demonstrated that the composition algebra of the category of representations of a quiver, specifically of type ADE, can be viewed as a realization of the positive part of the quantum group associated with the underlying graph. Lusztig further expanded this perspective in \cite{Lus91}, providing a geometric approach to Hall algebras via perverse sheaves and canonical bases. These structures play a significant role in understanding quantum groups, categorification, and cluster algebras, leading to applications including, but not limited to, symplectic geometry, algebraic geometry and mathematical physics.\\

Edge contraction is a fundamental operation in graph theory where an edge is removed, and its incident vertices are merged into a single vertex. This operation has been extensively studied in various mathematical contexts, including topology, combinatorics, and representation theory. In the setting of Hall algebras and quantum groups, edge contraction provides insights into how algebraic structures associated with graphs behave under simplifications of the graph. Recent studies, including Li's solo work \cite{Li23} and his collaboration with Ren \cite{LR24}, have explored how Hall algebras and cohomological Hall algebras behave under edge contractions, demonstrating induced embeddings that inform the structure of quantum groups, among other algebraic structures.\\

This paper extends Li’s work on embeddings among Hall algebras via edge contractions by relaxing the assumption that there is exactly one edge connecting the vertices involved in the contraction. We generalize this setting to cases where multiple edges exist between these vertices, leading to the formation of loops, a new phenomenon, after contraction. This generalization requires revisiting fundamental structures, including the definition of the Cartan datum, quiver representation spaces, and Hall algebras. A central question explored in this work is whether the embedding phenomenon established in Li’s work persists under this broader framework. We prove the following key result:

\begin{thrm} [Theorem~\ref{main embedding}]
For a quiver with multiple edges between two vertices along which the contraction takes place, the edge contraction operation induces an embedding of the Hall algebra of the contracted quiver into the associated Hall algebra prior to the edge contraction. This embedding is compatible with the multiplication structure and preserves key algebraic properties such as the Hopf algebra structure.
\end{thrm}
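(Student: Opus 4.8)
\medskip
\noindent\textbf{Plan.} The strategy is to follow the architecture of Li \cite{Li23} and Li--Ren \cite{LR24}, replacing each ingredient by its analogue in the presence of the loops produced by the contraction. Fix once and for all an orientation $\Omega$ of the original quiver in which the edge $b$ to be contracted is oriented $i_+\to i_-$, let $\widehat{\Omega}$ be the induced orientation of the contracted quiver, and observe that the merged vertex $i_0$ then carries the remaining $m-1$ arrows between $i_+$ and $i_-$ as loops. The first task is purely combinatorial: record the generalized Cartan datum of the contracted quiver, with the loop contributions at $i_0$ built into the symmetric form, and verify the identity that for a dimension vector $\widehat{V}$ of the contracted quiver and its lift $V$ (given by $V_i=\widehat{V}_i$ for $i\neq i_\pm$ and $V_{i_+}=V_{i_-}=\widehat{V}_{i_0}$) the Euler forms agree, $\langle V,V'\rangle_{\Omega}=\langle\widehat{V},\widehat{V}'\rangle_{\widehat{\Omega}}$; here the $m-1$ loops at $i_0$ are precisely what compensates for the $m$ arrows between $i_+$ and $i_-$ collapsing to a single vertex. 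The parallel geometric input is that the locus inside $\ev$ on which the arrow $b$ acts as an isomorphism is, $\mathbf{GL}$-equivariantly, a product of $\evhat$ with $\mathbf{GL}(V_{i_-})$: conjugating the remaining $m-1$ arrows between $i_+$ and $i_-$ by $b^{-1}$ turns them into the $m-1$ loops at $i_0$, while all other arrows are carried over unchanged.

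\medskip
\noindent\textbf{The homomorphism and multiplicativity.} Using this correspondence I would define the candidate map $\Phi\colon\hOhat\to\hO$, on the level of functions on isomorphism classes, by sending the class of a representation of the contracted quiver to the class of its lift (the representation in which $b$ is the identity); on the distinguished generators this reads $\Phi(\thiOhat)=\thiO$ for $i\neq i_0$, while the merged vertex is sent to a suitable element $\thizOhatb\in\hO$ of dimension vector $e_{i_+}+e_{i_-}$ attached to the edge $b$, and the loop generators at $i_0$ are sent to the corresponding arrow-datum classes. Multiplicativity is the easy half: by the five lemma, $b$ is invertible on $M$ in a short exact sequence $0\to M'\to M\to M''\to 0$ if and only if it is invertible on $M'$ and on $M''$, so the ``$b$ is an isomorphism'' locus is stable under extensions. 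Consequently the Hall product of two elements supported there is again supported there; moreover a submodule of a lift on which $b$ remains invertible is itself a lift, and one checks that the relevant Hall numbers $g^{C}_{A B}$ of the original quiver coincide with the Hall numbers $g^{\widehat{C}}_{\widehat{A}\widehat{B}}$ of the contracted quiver, the twisting factors being matched by the Euler-form identity above. (Equivalently, in the composition-algebra presentation one verifies the quantum Serre relations together with the additional relations involving the loop generators at $i_0$.)

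\medskip
\noindent\textbf{Injectivity, Hopf structure, and the main obstacle.} Injectivity follows by producing a one-sided inverse: restricting a function on isomorphism classes of the original quiver to the ``$b$ is an isomorphism'' locus and identifying that locus with the representations of the contracted quiver gives a linear retraction $\rho$ with $\rho\circ\Phi=\mathrm{id}$. The genuine difficulty is the coalgebra side, namely that $\Phi$ intertwines Green's comultiplications (and then the counits and antipodes, so that $\Phi$ is a morphism of topological bialgebras and, after the standard extension, of Hopf algebras). This fails for the naive lift map, because a submodule or a quotient of a lift need not be a lift: the ``$b$ is an isomorphism'' locus is \emph{not} closed under subobjects, so Green's coproduct of the class of a lift acquires terms outside the image. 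Reconciling this is exactly what forces the careful choice of the twisted generator $\thizOhatb$ and of the images of the loop generators, designed so that the coproduct identity holds on generators even though it fails termwise; compared with the single-edge case of \cite{Li23}, the new feature is that the merged vertex now sees a $\mathbb{P}^{m-1}$-family of two-dimensional indecomposables and $m-1$ loop directions, and the bookkeeping of this data --- already foreshadowed by the revised Cartan datum and Euler form --- is the main obstacle. Once those choices are pinned down, compatibility with the counit and the antipode is handled by standard arguments for connected graded bialgebras, and the remaining verifications are routine adaptations of \cite{Li23, LR24}.
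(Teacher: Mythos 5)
Your treatment of the algebra embedding itself is, in substance, the paper's argument translated into Ringel--Hall language. The paper also works on the open locus $\evhF$ where the contracted arrows act as isomorphisms, identifies it (via the contraction map $\mu_\nu$, i.e.\ your ``conjugate the remaining arrows by $b^{-1}$ to create loops'') with a $\gv^{[\bfim],\bfF}$-torsor over $\evhatF$, and defines the embedding as $\psi=j_!\circ\mu^\star$, which on characteristic functions of orbits is exactly your ``lift'' map; your extension-stability of the locus corresponds to the Cartesian squares of the extended induction diagram, and your Euler-form identity is the paper's exponent cancellation $N=0$, equivalently $m_{\widehat\Omega}(\tau,\omega)-m_\Omega(\tau,\omega)=-2\tau_{i_-}\omega_{i_-}\phi_1(i_-)$, which only comes out right after twisting $\mu_\nu^*$ by $(q^{-1/2})^{\nu_{i_-}^2\phi_1(i_-)}$. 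Injectivity via your retraction is the paper's $j^*$ together with $\mu^\star$ being an isomorphism. One caveat of generality: the paper works equivariantly with an admissible automorphism and the twisted Frobenius, so $i_\pm$ are $a$-orbits and every exponent carries a $\phi_1$ factor; your single-vertex, single-orbit bookkeeping must be upgraded to that setting, though this is routine.

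The genuine gap is the Hopf/coproduct part of the claim. You correctly observe that the naive compatibility with Green's coproduct fails termwise (a subobject of a lift need not be a lift), but your proposed remedy --- replacing the image of the merged vertex by an unspecified twisted element $\thizOhatb$ and choosing ``images of the loop generators'' so that the identity holds ``on generators'' --- is not carried out and cannot work as stated: for a quiver with loops the Hall algebra is not generated by the classes of the simple objects, so a verification on generators does not determine anything on all of $\hOhat$; moreover, altering the image of $i_0$ would destroy the PBW compatibility $j_!\mu_\nu^*(P_{\mcO'})=P_{\mcO}$, which the paper proves precisely for the unmodified map $\psi$. The paper's route keeps the same map and supplies the coalgebra-side input geometrically, through the extended restriction diagram: commutativity and Cartesianness of its top squares, and the computation that $\bfS_{\bfW}^{\heart,\bfF}$ is a vector bundle over the fiber product of the bottom-right square with fiber $(\mathbb{F}_{q^{\phi_1(i_+)}})^{\tau_{i_+}\omega_{i_-}}$ (the first two lemmas of Section 2.2); the terms of $\mathrm{Res}$ leaving the heart locus are then handled through $j^*$ and the split exact sequence $0\to\hOc\to\hOIhat\to\hOh\to 0$ of Theorem \ref{split subquotient}, not by re-choosing generators. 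Without an argument at this level of precision, the Hopf-compatibility portion of your proposal remains an assertion rather than a proof.
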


It turns out that the embedding is a result of a composition of two injective algebra homomorphisms, namely $j_!$ and $\mu^\star$, described in section 2.3. The proof that these maps are algebra homomorphisms relies on the four lemmas in section 2.2. 

\begin{thrm} [Theorem~\ref{split subquotient}]
 Hall algebra of a contracted quiver is a split subquotient of the Hall algebra prior to the edge contraction.
\end{thrm}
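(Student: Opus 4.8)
The plan is to deduce the split-subquotient statement as a formal consequence of the embedding from Theorem~\ref{main embedding}, together with the explicit factorization $\evhat \hookrightarrow \ev$ through the two algebra homomorphisms $j_!$ and $\mu^\star$ recalled in section~2.3. Recall that in Li's framework a \emph{split subquotient} of an algebra $A$ is an algebra $B$ for which there exist a subalgebra $A' \subseteq A$ and a surjective algebra homomorphism $\pi\colon A' \twoheadrightarrow B$ admitting an algebra section $s\colon B \to A'$ with $\pi \circ s = \mathrm{id}_B$; equivalently, $B$ is realized as both a sub- and a quotient object compatibly. So the task reduces to producing the triple $(A', \pi, s)$ with $A = \ev$ (the Hall algebra before contraction) and $B = \evhat$ (the Hall algebra of the contracted quiver).

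First I would set $s$ to be the embedding $\iota := \mu^\star \circ j_!\colon \evhat \hookrightarrow \ev$ furnished by Theorem~\ref{main embedding}, and let $A' := \operatorname{im}(\iota)$, which is a subalgebra of $\ev$ precisely because $\iota$ is an algebra homomorphism. Then $s\colon \evhat \xrightarrow{\ \sim\ } A'$ is an algebra isomorphism onto its image, so taking $\pi := s^{-1}\colon A' \to \evhat$ gives a surjective algebra homomorphism with $\pi \circ s = \mathrm{id}$. This already exhibits $\evhat$ as a split subquotient in the trivial sense. The substantive content, and the reason the theorem is worth stating separately, is to identify $\pi$ intrinsically — i.e. to show that the retraction $A' \to \evhat$ is induced by a natural operation on Hall algebras (a restriction-along-a-subquiver or a pullback $\mu_\star$/$j^!$ type map), rather than merely being the abstract inverse of $\iota$. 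Concretely, I would show that $\mu^\star$ has a one-sided inverse coming from the quotient map $\mu$ (the map collapsing the contracted vertex data), and that $j_!$ has a one-sided inverse $j^!$ given by restriction to the open piece; combining these on the nose yields $\pi$ as a genuine Hall-algebra morphism defined on all of a natural subalgebra containing $A'$, and one checks $\pi \circ \iota = \mathrm{id}_{\evhat}$ by tracking the effect on the standard basis elements (indicator functions of isomorphism classes), which is exactly what the four lemmas of section~2.2 control.

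The steps, in order, are: (1) recall the precise definitions of $j_!$, $\mu^\star$ and their would-be partners $j^!$, $\mu_\star$ from section~2.3, and verify each partner is an algebra homomorphism on the relevant (sub)algebra using the section~2.2 lemmas; (2) prove the composition identities $j^! \circ j_! = \mathrm{id}$ and $\mu_\star \circ \mu^\star = \mathrm{id}$ on the appropriate domains, reducing to a computation with characteristic functions of $\bfG_{\widehat V}$-orbits versus $\bfG_V$-orbits along the contraction; (3) assemble $A'$, $\pi := j^! \circ \mu_\star$ restricted suitably, and $s := \iota$, and conclude $\pi \circ s = \mathrm{id}_{\evhat}$, hence $\evhat$ is a split subquotient of $\ev$; (4) remark that by Theorem~\ref{main embedding} all of these maps are compatible with the comultiplication and antipode, so the split-subquotient structure is one of Hopf algebras, not merely of algebras. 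The main obstacle I anticipate is step~(2): showing the retractions are well-defined \emph{algebra} maps (not just linear maps) and that the composites are the identity requires controlling how the stratification of the representation space $\bfE_{V,\Omega}$ by the position relative to the contracted edges interacts with extensions — essentially the same Green's-formula bookkeeping that underlies the four lemmas — and handling the loop contributions, which is exactly the new feature absent from Li's original single-edge setting.
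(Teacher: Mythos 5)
Your route is in substance the same as the paper's (whose proof simply defers to Li's Proposition 2.4.1, with the real work set up in Section 2.5): the section is $j_!\circ\mu^\star$, the retraction is restriction to the heart locus followed by $\mu_\star=(\mu^\star)^{-1}$, and the natural subalgebra of $\hO$ is $\hOIhat=\bigoplus_{\nu\in\mb N[\whI]}\hvO$. However, the decisive step is neither of your composition identities $j^*\circ j_!=\mathrm{id}$ and $\mu_\star\circ\mu^\star=\mathrm{id}$ (both are immediate from extension-by-zero and from $\mu^\star$ being an isomorphism); it is the claim that the restriction map $j^*:\hOIhat\to\hOh$ is an \emph{algebra} homomorphism, equivalently that its kernel $\hOc$ (invariant functions supported on the complement $\evcF$ of the heart locus) is a two-sided ideal of $\hOIhat$. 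You only flag this as an ``anticipated obstacle'' to be handled by Green's-formula bookkeeping and the four lemmas of Section 2.2, so in your proposal it remains a genuine gap. In the paper this is Lemma 2.5.1, and its proof is short and independent of those lemmas: for $x\in\bfS_\bfW$ the edge maps along the contracted edges are block upper-triangular, so $x_h$ is invertible iff $x_h^{\bfT}$ and $x_h^{\bfW}$ are; hence no extension built from a point lying off the heart locus can land in $\evhF$, so $\hOc$ absorbs products. With that lemma one gets the split short exact sequence $0\to\hOc\to\hOIhat\xrightarrow{\,j^*\,}\hOh\to 0$ with algebra section $j_!$, and then $\hOhat\cong\hOh\cong\hOIhat/\hOc$ via $\mu^\star$, which is the actual assertion of the theorem.

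Two smaller points. First, your opening ``trivial sense'' construction (take $A'=\operatorname{im}(\iota)$ and $\pi=\iota^{-1}$) makes \emph{any} embedded subalgebra a split subquotient, so it adds nothing beyond Theorem~\ref{main embedding}; the content of Theorem~\ref{split subquotient} is precisely the intrinsic ideal $\hOc$ and the exact sequence above, which you do subsequently aim at, so this is a framing issue rather than an error. Second, as written your composites $\mu^\star\circ j_!$ and $j^!\circ\mu_\star$ do not typecheck under the usual right-to-left convention: the embedding is $j_!\circ\mu^\star:\hOhat\to\hOh\to\hO$ and the retraction is $\mu_\star\circ j^*:\hOIhat\to\hOh\to\hOhat$; also the Hall algebras are $\hOhat$ and $\hO$, not the representation spaces ${\bf E}_{\widehat V,\widehat\Omega}$ and ${\bf E}_{V,\Omega}$, and the Hopf-compatibility claim in your step (4) goes beyond what this theorem (or its proof in the paper) establishes.
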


The proof of this theorem follows directly from \cite{Li23}, Proposition 2.4.1 on page 17.\\

Looking ahead, our work opens up several exciting directions for further research. One promising avenue is to investigate the subquotient structure predicted by Li within the broader setting of Hall algebras that include loops. We aim to determine whether this phenomenon is a universal feature that can be observed across various algebraic systems. Additionally, exploring the interplay between edge contraction and modern categorification frameworks—such as Khovanov-Lauda-Rouquier algebras \cite{Mak18} and quiver Hecke algebras—could provide deeper insights into both the combinatorial and geometric aspects of representation theory. By clarifying these connections, we hope to pave the way for new applications in mathematical physics and a more unified understanding of the underlying algebraic structures.\\

This paper starts by defining the generalized Cartan datum, introducing key modifications necessary to accommodate quivers with multiple edges and loops. It also provides an overview of root data and Weyl groups associated with such quivers. Following this, we develop the representation spaces corresponding to quivers with loops, laying the groundwork for our study of Hall algebras in this setting. We then review Lusztig’s induction and restriction diagrams, which play a crucial role in constructing and understanding embeddings among Hall algebras. Using these tools, we define Hall algebras associated with quivers containing loops and establish our main result: the embedding of Hall algebras under edge contractions in this generalized framework. Finally, we demonstrate the compatibility of this embedding with the PBW basis and analyze its interaction with split short exact sequences\\

 \newpage

\subsection*{Acknowledgements}
I was first introduced to Hall algebras by Yiqiang Li when he shared his paper on Quantum Groups and Edge Contractions with me during my time as his student. Shortly thereafter, he became my research advisor and encouraged me to generalize his work to graphs with multiple edges between pairs of vertices. This work, which forms a part of my thesis, is the result of his unwavering encouragement, patience, and insightful guidance, which have been instrumental in bringing this project to fruition.\\

\pagenumbering{roman}
\tableofcontents

\newpage

\pagenumbering{arabic}
\section{Foundations}

In this section, we define the generalized Cartan data, study quivers and Weyl groups under edge contractions.\\

\subsection{Generalized Cartan data}

Define the set of natural numbers as $\mb{N} = \{0, 1, 2, .....\}$. The generalized Cartan datum is defined to be a quadruple $(I,  \cdot, \phi_1,  \phi_2)$, where $I$ is a finite set,  $\phi_1: I \longrightarrow \mb{N}\backslash \{0\}, ~\phi_2: I \longrightarrow \mb{N}$, along with a symmetric bilinear form ``$\cdot$" that maps values in $\mathbb{Z}$ and is defined on the free abelian group $\mathbb{Z}[I]$, meeting the following conditions:\\
1. $\text{~For every~} i \in I, \text{~we have,~}$ 
\[ i\cdot i = 2(\phi_1(i) - \phi_1(i)\phi_2(i)) \in 2\mb{Z}; \myeqn{} \label{cond1} \] \\
2. $\text{For distinct elements~} i, j \in I,$
\[ \ds\frac{i\cdot j}{\phi_1(i)} \in -\mb{N}.
\myeqn{} \label{cond2} \] \\

For a directed graph, the maps $\phi_1(i)$ and $\phi_2(i)$ count the number of vertices in the $a-$orbit [$i$] and the number of loops on $i$ respectively. See Section \ref{Graph Structures} for further details. The generalized Cartan datum restricts to Lusztig's Cartan datum when $\phi_2$ is identically equal to 0. \\

\begin{example}
Here is an example of a generalized Cartan datum for $I = \{a, b, c\}$. \\
\begin{equation}
\begin{aligned}  
\phi_1(a) &= 2 \hspace{1 cm} \phi_2(a) &= 2 \hspace{1 cm} a \cdot b &= -6\\
\phi_1(b) &= 3 \hspace{1 cm} \phi_2(b) &= 1 \hspace{1 cm} a \cdot c &= 0\\
\phi_1(c) &= 1 \hspace{1 cm} \phi_2(c) &= 3 \hspace{1 cm} b \cdot c &= -3
\end{aligned}
\myeqn{} \label{cartan example}
\end{equation}
\label{example 1}
\end{example}

Figure \ref{Example graph of a Cartan datum} in section \ref{Graph Structures} is an example of a graph whose Cartan datum would be the one in the above example. \\

Now, suppose there exists a particular pair $(i_+, i_-) \in I$ that satisfies the following:
\begin{equation}
\begin{aligned}  
\phi_1(i_+) &= \phi_1(i_-), \\
  \phi_2(i_+) &= \phi_2(i_-) = 0,\\
 i_+ \cdot i_- &\neq 0.
\end{aligned}
\myeqn{} \label{mainassumption}
\end{equation}

Let us fix such a pair.  Define $$i_0 = i_+ + i_- \in \mb{Z}[I]$$ 

Next, construct the set $\hat{I} = I \cup \{i_0\} - \{i_+,  i_-\}$. Note that the free abelian group $\mb{Z}[\hat{I}]$ forms a subgroup of $\mb{Z}[I]$ and the bilinear form $\cdot$ on $\mb{Z}[I]$ naturally restricts to a symmetric bilinear form on $\mb{Z}[\hat{I}],$ still denoted by $\cdot$.\\

With these definitions, we obtain the following lemma:

\begin{lemma} The quadruple $(\hat{I},  \cdot,  \widehat{\phi_1},  \widehat{\phi_2})$ is a generalized Cartan datum where \\ 
$\widehat{\phi_1}: \hat{I} \longrightarrow \mb{N}\backslash \{0\}$ is defined as $\widehat{\phi_1}(i) = \left\{ \begin{array} {c c} \phi_1(i)  & \text{if} \ \ i \neq i_0\\
\phi_1(i_+) ~(= \phi_1(i_-))~~~ & \text{if} \ \  i = i_0 \end{array} \right. $ \end{lemma}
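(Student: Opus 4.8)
The plan is to verify directly that $(\hat{I}, \cdot, \widehat{\phi_1}, \widehat{\phi_2})$ satisfies the two defining conditions \eqref{cond1} and \eqref{cond2} of a generalized Cartan datum, where $\widehat{\phi_2}$ is presumably defined analogously to $\widehat{\phi_1}$, namely $\widehat{\phi_2}(i) = \phi_2(i)$ for $i \neq i_0$ and $\widehat{\phi_2}(i_0) = 0$ (using the assumption $\phi_2(i_+) = \phi_2(i_-) = 0$ from \eqref{mainassumption}). Since $\hat{I} = I \cup \{i_0\} - \{i_+, i_-\}$ and the bilinear form on $\mathbb{Z}[\hat{I}]$ is the restriction of the one on $\mathbb{Z}[I]$, all conditions involving only elements of $I \cap \hat{I}$ are inherited immediately from the fact that $(I, \cdot, \phi_1, \phi_2)$ is already a generalized Cartan datum. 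So the entire content is to check the conditions for the new element $i_0 = i_+ + i_-$.

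The key steps, in order, are as follows. First, I would compute $i_0 \cdot i_0$ using bilinearity: $i_0 \cdot i_0 = i_+ \cdot i_+ + 2(i_+ \cdot i_-) + i_- \cdot i_-$. Applying \eqref{cond1} to $i_+$ and $i_-$ and using $\phi_2(i_+) = \phi_2(i_-) = 0$ gives $i_\pm \cdot i_\pm = 2\phi_1(i_\pm)$, so $i_0 \cdot i_0 = 2\phi_1(i_+) + 2\phi_1(i_-) + 2(i_+\cdot i_-) = 4\phi_1(i_+) + 2(i_+ \cdot i_-)$, using $\phi_1(i_+) = \phi_1(i_-)$. I then need this to equal $2(\widehat{\phi_1}(i_0) - \widehat{\phi_1}(i_0)\widehat{\phi_2}(i_0)) = 2\phi_1(i_+)$ since $\widehat{\phi_2}(i_0) = 0$. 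Comparing, this forces $4\phi_1(i_+) + 2(i_+ \cdot i_-) = 2\phi_1(i_+)$, i.e. $i_+ \cdot i_- = -\phi_1(i_+)$. This is not automatic from \eqref{mainassumption} alone, so either the lemma carries an implicit additional hypothesis on the pair $(i_+, i_-)$ (that $i_+ \cdot i_- = -\phi_1(i_+)$, the "minimal" nonzero value allowed by \eqref{cond2}), or the definition of $\widehat{\phi_2}(i_0)$ is not $0$ but rather chosen precisely so that \eqref{cond1} holds — namely $\widehat{\phi_2}(i_0) = 1 + \frac{i_+ \cdot i_-}{\phi_1(i_+)} \cdot (-1)$... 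I would reconcile this by reading the precise definition of $\widehat{\phi_2}$ given just after the lemma statement in the paper; the natural reading consistent with the loop-counting interpretation is that contracting $i_+, i_-$ with $k$ edges between them produces $k-1$ loops, i.e. $\widehat{\phi_2}(i_0) = -i_+ \cdot i_- / \phi_1(i_+) - 1$, and then the computation above closes exactly.

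Second, I would check \eqref{cond2} for the pairs involving $i_0$. For $j \in \hat{I}$ with $j \neq i_0$ (so $j \in I$, $j \neq i_\pm$), bilinearity gives $i_0 \cdot j = i_+ \cdot j + i_- \cdot j$, and since $\phi_1(i_+) = \phi_1(i_-)$, dividing by $\widehat{\phi_1}(i_0) = \phi_1(i_+)$ yields $\frac{i_0 \cdot j}{\widehat{\phi_1}(i_0)} = \frac{i_+ \cdot j}{\phi_1(i_+)} + \frac{i_- \cdot j}{\phi_1(i_-)}$, which is a sum of two elements of $-\mathbb{N}$ by \eqref{cond2} applied in $(I, \cdot, \phi_1, \phi_2)$, hence lies in $-\mathbb{N}$. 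Symmetrically, I would check $\frac{j \cdot i_0}{\widehat{\phi_1}(j)} = \frac{j \cdot i_+}{\phi_1(j)} + \frac{j \cdot i_-}{\phi_1(j)} \in -\mathbb{N}$, which holds for the same reason (here $\phi_1(j) = \widehat{\phi_1}(j)$ since $j \neq i_0$). Finally I would note $\widehat{\phi_1}$ indeed maps into $\mathbb{N} \setminus \{0\}$ (clear, since $\phi_1$ does and $\phi_1(i_+) \neq 0$) and $\widehat{\phi_2}$ into $\mathbb{N}$ (clear for $j \neq i_0$; for $i_0$ it requires $i_+ \cdot i_- < 0$, which holds by \eqref{mainassumption} combined with \eqref{cond2}, and that $\phi_1(i_+) \mid i_+ \cdot i_-$, again from \eqref{cond2}).

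The main obstacle is the verification of condition \eqref{cond1} for $i_0$: it is the only place where the specific value of $\widehat{\phi_2}(i_0)$ and the interaction between $i_+ \cdot i_-$ and $\phi_1(i_+)$ matter, and getting the bookkeeping right — i.e. confirming that the paper's definition of $\widehat{\phi_2}(i_0)$ is exactly the one that makes $i_0 \cdot i_0 = 2(\widehat{\phi_1}(i_0) - \widehat{\phi_1}(i_0)\widehat{\phi_2}(i_0))$ hold identically — is where the real (if short) work lies. Everything else is a routine consequence of bilinearity together with the hypothesis $\phi_1(i_+) = \phi_1(i_-)$ and the fact that $(I, \cdot, \phi_1, \phi_2)$ is already a generalized Cartan datum.
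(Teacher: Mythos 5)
Your proposal is correct and follows essentially the same route as the paper: verify condition \eqref{cond1} for $i_0$ by expanding $i_0\cdot i_0$ bilinearly and using $\phi_2(i_\pm)=0$, $\phi_1(i_+)=\phi_1(i_-)$, and verify \eqref{cond2} by splitting $i_0\cdot j$ into $i_+\cdot j + i_-\cdot j$. Your reconstructed value $\widehat{\phi_2}(i_0)=\frac{-i_+\cdot i_-}{\phi_1(i_+)}-1$ is exactly the definition the paper gives immediately after the lemma statement, so the tentative alternative $\widehat{\phi_2}(i_0)=0$ is moot and the computation closes just as in the paper's proof.
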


\vspace{0.4 cm}
\noindent and $\widehat{\phi_2}: \hat{I} \longrightarrow \mb{N}$ is defined as $\widehat{\phi_2}(i) = \left\{ \begin{array} {c c} \phi_2(i)  & \text{if} \ \ i \neq i_0\\
\ds\frac{-i_+ \cdot i_-}{\phi_1(i_+)} - 1 ~~~ & \text{if} \ \  i = i_0 \end{array} \right. $.\\
\begin{proof}
We only need to show the condition (1) holds when $i_0$ is involved. First to show that $i_0\cdot i_0 = 2(\widehat{\phi_1}(i_0) - \widehat{\phi_1}(i_0)\widehat{\phi_2}(i_0))$. The left hand side of the equation is evaluated as follows: 
\begin{equation*}
\begin{aligned}
i_0\cdot i_0  &= (i_+ + i_-) \cdot (i_+ + i_-)\\ 
&= i_+\cdot i_+  + 2i_+\cdot i_- + i_-\cdot i_- \\
&= 2\phi_1(i_+) + 2i_+\cdot i_- + 2\phi_1(i_-).\\
\end{aligned}
\end{equation*}
And the right hand side is 
\begin{equation*}
\begin{aligned}
2[\widehat{\phi_1}(i_0) - \widehat{\phi_1}(i_0)\widehat{\phi_2}(i_0)] &=   2 \left[ \phi_1(i_+) - \phi_1(i_+)\left(  \ds\frac{-i_+ \cdot i_-}{\phi_1(i_+)} - 1 \right)  \right] \\ 
& = 2\phi_1(i_+) + 2i_+\cdot i_- + 2\phi_1(i_+)\\
& = 2\phi_1(i_+) + 2i_+\cdot i_- + 2\phi_1(i_-).\\
\end{aligned}
\end{equation*}
Hence, $i_0\cdot i_0 = 2(\widehat{\phi_1}(i_0) - \widehat{\phi_1}(i_0)\widehat{\phi_2}(i_0))$. Also, we have,  
\begin{equation*}
\begin{aligned}
\dfrac{i_0 \cdot j  }{\phi_1(i_0)} &= \dfrac{ (i_+ + i_-)\cdot j }{\phi_1(i_0)}  =  \dfrac{ i_+ \cdot j  }{\phi_1(i_+)} + \dfrac{ i_- \cdot j  }{\phi_1(i_-)} \in -\mb{N} ~~~ \forall j \in \hat{I} - \{i_0\},\\
\hspace{2.6 cm} \dfrac{j \cdot i_0  }{\phi_1(j)} &= \dfrac{ j \cdot (i_+ + i_-) }{\phi_1(i_0)}  =  \dfrac{j \cdot i_+  }{\phi_1(j)} + \dfrac{ j \cdot i_-  }{\phi_1(j)} \in -\mb{N} ~~~ \forall j \in \hat{I} - \{i_0\}.
\end{aligned}
\end{equation*}
This proves the lemma.
\end{proof}

The generalized Cartan datum $(\hat{I},  \cdot,  \widehat{\phi_1},  \widehat{\phi_2})$ is called the \underline{edge contraction} of $(I,  \cdot, \phi_1,  \phi_2)$ along the pair $\{i_+ ,   i_-\}$.\\

\subsection{Graph Structures} \label{Graph Structures}

Consider a finite oriented graph that may include loops, represented by the quadruple $({\bf I},  \Omega, ~ ' : \Omega \to {\bf I}, ~ '' : \Omega \rightarrow {\bf I}),  $ where ${\bf I}$ and $\Omega$ are two finite sets with ${\bf I} \neq \emptyset$. We define an admissible automorphism $a$ for this oriented graph as a pair of bijections, $(a: {\bf I} \rightarrow \bfI,  a: \Omega \rightarrow \Omega)$, satisfying:
\begin{itemize}
    \item $a(h') = a(h)',  a(h'') = a(h)'',  ~\forall h \in \Omega$,
    \item $\{a(h)' , a(h)''\} \nsubseteq [ \bfi]$ for all $h \in \Omega$ with $h' \neq h''$ and for any $\bfi \in \bfI$ where $[\bfi]$ is the $a$-orbit of $\bfi$.
\end{itemize}
Henceforth, $[\bfi]$ is replaced by $\bfi$ where the context is clear.\\

With the oriented graph $({\bf I},  \Omega)$ and its admissible automorphism $a$, we can derive a generalized Cartan datum $(\bfI /a,  \cdot, \phi_1,  \phi_2)$ where $\bfI / a $ represents the set of $a$-orbits in $\bfI$, and where:
\begin{enumerate}
    \item  $\phi_1:\bfI / a\rightarrow \mb{N} \backslash \{0\}$ defined as $\phi_1(\bfi) = \# [\bfi]$.
    \item $ \phi_2: \bfI /a \rightarrow \mb{N}$ is defined as $\phi_2(\bfi) = \# \{ h \in \Omega ~|~ h' = h'' = j   \text{~for some fixed~} j \in [\bfi]\}$.
    \item $[\bfi]\cdot [\bfi] = 2(\phi_1(\bfi) - \phi_1(\bfi)\phi_2(\bfi)) \in 2\mathbb{Z}$ for all $[\bfi] \in \bfI / a$.
    \item $[\bfi]\cdot [\bfj] = - \# \{h \in \Omega ~|~ h',  h'' \in [\bfi] \cup [\bfj],  h' \neq h'' \} $  for all $[\bfi] \neq [\bfj] \in \bfI / a$.
\end{enumerate}

It is evident that $\ds\frac{[\bfi] \cdot [\bfj]}{\phi_1(\bfi)} \in -\mb{N} ~~\forall [\bfi] \neq [\bfj] \in \bfI / a$.\\

Example of an oriented graph based on the Cartan datum in example \ref{example 1} with loops respecting the action of an admissible automorphism $a$ is as follows:

\begin{figure}[H]
  \centering
  \begin{tikzpicture}
  \node[ellipse, minimum width=1.8cm, minimum height=4.8cm, draw] (ellipse1) at (0,0) {};
  \node[circle, fill, inner sep=1pt] (dot11) at (0, 0.7) {};
  \node[circle, fill, inner sep=1pt] (dot12) at (0, -0.7) {};
  
  \node[ellipse, minimum width=1.8cm, minimum height=4.8cm, draw] (ellipse2) at (4,0) {};
  \node[circle, fill, inner sep=1pt] (dot21) at (4, 1.2) {};
   \node[circle, fill, inner sep=1pt] (dot22) at (4, 0) {};
  \node[circle, fill, inner sep=1pt] (dot23) at (4, -1.2) {};

  \node[ellipse, minimum width=1.8cm, minimum height=4.8cm, draw] (ellipse3) at (8,0) {};
  \node[circle, fill, inner sep=1pt] (dot31) at (8, 0) {};

  \node at (0, 3) {\( [a] \)}; 
  \node at (4, 3) {\( [b] \)}; 
  \node at (8, 3) {\( [c] \)}; 

  \draw[blue, -,postaction={decorate,decoration={markings,mark=at position 0.5 with {\arrow[thick]{stealth};}}}] (dot11) -- (dot21);
  \draw[blue, -,postaction={decorate,decoration={markings,mark=at position 0.5 with {\arrow[thick]{stealth};}}}] (dot11) -- (dot22);
  \draw[blue, -,postaction={decorate,decoration={markings,mark=at position 0.5 with {\arrow[thick]{stealth};}}}] (dot11) -- (dot23);
  \draw[blue, -,postaction={decorate,decoration={markings,mark=at position 0.5 with {\arrow[thick]{stealth};}}}] (dot12) -- (dot21);
  \draw[blue, -,postaction={decorate,decoration={markings,mark=at position 0.5 with {\arrow[thick]{stealth};}}}] (dot12) -- (dot22);
  \draw[blue, -,postaction={decorate,decoration={markings,mark=at position 0.5 with {\arrow[thick]{stealth};}}}] (dot12) -- (dot23);

  \draw[blue, -,postaction={decorate,decoration={markings,mark=at position 0.5 with {\arrow[thick]{stealth};}}}] (dot21) -- (dot31);
  \draw[blue, -,postaction={decorate,decoration={markings,mark=at position 0.5 with {\arrow[thick]{stealth};}}}] (dot22) -- (dot31);
  \draw[blue, -,postaction={decorate,decoration={markings,mark=at position 0.5 with {\arrow[thick]{stealth};}}}] (dot23) -- (dot31);

  \draw[red, -,postaction={decorate,decoration={markings,mark=at position 0.5 with {\arrow[thick]{stealth};}}}] (dot11) to[out=150, in=110, looseness=170] (dot11);
  \draw[red, -,postaction={decorate,decoration={markings,mark=at position 0.5 with {\arrow[thick]{stealth};}}}] (dot11) to[out=200, in=150, looseness=120] (dot11);
  \draw[red, -,postaction={decorate,decoration={markings,mark=at position 0.5 with {\arrow[thick]{stealth};}}}] (dot12) to[out=180, in=140, looseness=170] (dot12);
  \draw[red, -,postaction={decorate,decoration={markings,mark=at position 0.5 with {\arrow[thick]{stealth};}}}] (dot12) to[out=230, in=180, looseness=120] (dot12);

  \draw[red, -,postaction={decorate,decoration={markings,mark=at position 0.5 with {\arrow[thick]{stealth};}}}] (dot21) to[out=80, in=50, looseness=110] (dot21);
  \draw[red, -,postaction={decorate,decoration={markings,mark=at position 0.5 with {\arrow[thick]{stealth};}}}] (dot22) to[out=80, in=50, looseness=110] (dot22);
  \draw[red, -,postaction={decorate,decoration={markings,mark=at position 0.5 with {\arrow[thick]{stealth};}}}] (dot23) to[out=80, in=50, looseness=110] (dot23);

  \draw[red, -,postaction={decorate,decoration={markings,mark=at position 0.5 with {\arrow[thick]{stealth};}}}] (dot31) to[out=80, in=30, looseness=140] (dot31);
  \draw[red, -,postaction={decorate,decoration={markings,mark=at position 0.5 with {\arrow[thick]{stealth};}}}] (dot31) to[out=30, in=-20, looseness=140] (dot31);
  \draw[red, -,postaction={decorate,decoration={markings,mark=at position 0.5 with {\arrow[thick]{stealth};}}}] (dot31) to[out=-20, in=-70, looseness=140] (dot31);

  \end{tikzpicture}

\caption{  { \it An example of a graph with 6 vertices and 3 a-orbits drawn as ellipses. The red edges are loops and the blue edges are the between different vertices } }
\label{Example graph of a Cartan datum}
\end{figure}
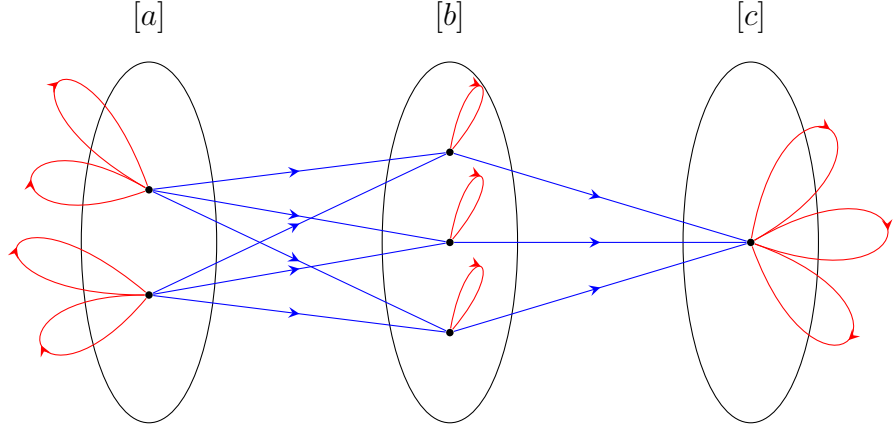

Assume that there exist two orbits $[\bfip] , [\bfim] \in \bfI /a$ such that 
\begin{equation}
\begin{aligned}
1. \quad & \text{\#} [\bfip] = \text{\#} [\bfim] \text{ i.e. }  \phi_1(\bfip) = \phi_1(\bfim). \\
2. \quad & \text{If } h \in \Omega \text{ such that } h', h'' \in [\bfip] \cup [\bfim],  h' \neq h'' \text{ then there does not exist} \\ 
         & l \in \Omega \text{ such that } l', l'' \in [\bfip] \cup [\bfim],  l' \in \{h',  h'' \} \text{ but } I'' \notin \{h', h'' \}. \\
3. \quad & \phi_2(\bfip) = \phi_2(\bfim) = 0. \\
\end{aligned}
\myeqn{} \label{graphassumption}
\end{equation}

Fix the pair $[\bfip] , [\bfim] \in \bfI /a$.  Choose an edge $e \in \Omega$ such that $e' \in [\bfip]$ and $e'' \in [\bfim]$. If there exists no such edge, rename $[\bf i_+]$ to $[\bfim]$ and vice versa. Define $h_k = a^k(e) ~~~\forall~ 1 \leq k \leq \phi_1([\bfi])$. Now, for any edge $h \in \Omega$, let $\overline{h}$ be a corresponding formal symbol that will represent the edge opposite to $h$, i.e., $h^{\prime} = \overline{h}^{\prime\prime}$ and $h^{\prime\prime} = \overline{h}^{\prime}$. We construct a new oriented graph as follows:
\begin{equation*}
\begin{aligned}
\widehat{\bfI} &= \bfI - [\bfim],\\
\widehat{\Omega} &= \Omega ~~\cup~~ \{ l_1 h_k  ~|~ h_k'' = l_1'  \}_{\{ 1 \leq k \leq \phi_1([\bfip])  \}}~~ \cup~~ \{ \overline{h_k}~l_2  ~|~ h_k'' = l_2''  \}_{\{ 1 \leq k \leq \phi_1([\bfip])  \}} \\
 & \hspace{0.4 cm} - \{ h ~|~ h', h'' \in [\bfim] \} .
\end{aligned}
\end{equation*}
The new maps $^{\prime}: \widehat{\Omega} \longrightarrow \widehat{I}$ and $^{\prime\prime}: \widehat{\Omega} \longrightarrow \widehat{I}$ on ($\widehat{I}, \widehat{\Omega}$) are induced by the original maps $^{\prime}$ and $^{\prime\prime}$ on the graph ($I, \Omega$) and by defining
$$(l_1h_k)^{\prime} = h_k^{\prime},~ (l_1h_k)^{\prime\prime} = l_1^{\prime\prime} \text{~whenever~} h_k'' = l_1',$$
$$( \overline{h_k}~l_2)^{\prime} = l_2^{\prime}, ~ ( \overline{h_k}~l_2)^{\prime\prime} = h_k^{\prime}  \text{~whenever~} h_k'' = l_1''.$$
We further define an automorphism $\widehat{a}$ on the new graph ($\widehat{I}, \widehat{\Omega}$) by defining $\widehat{a}: \widehat{I} \longrightarrow \widehat{I}$ as the restriction of $a: I \longrightarrow I$ and $a: \widehat{\Omega} \longrightarrow \widehat{\Omega}$ as follows:
\[
\begin{aligned}
\widehat{a}(h) &= a(h) & \text{if~~} h \in \Omega, \left.\hspace{0.85 cm} \right. \\
\widehat{a}(l_1h_k) &= a(l_1)a(h_k)  & \text{if~~} h_k'' = l_1' \in \Omega, \\
\widehat{a}(\overline{h_k}l_2) &= \overline{a(h_k)}a(l_2)  & \text{if~~} h_k'' = l_2'' \in \Omega .
\end{aligned} \]

\begin{lemma} The automorphism $\widehat{a}$ is admissible. \end{lemma}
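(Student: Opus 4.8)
To show that $\widehat{a}$ is admissible I must verify the two defining conditions for an admissible automorphism of the oriented graph $(\widehat{\bfI}, \widehat{\Omega})$: first, that $\widehat{a}$ commutes with the incidence maps $'$ and $''$ on $\widehat\Omega$, i.e. $\widehat{a}(h)' = \widehat{a}(h')$ and $\widehat{a}(h)'' = \widehat{a}(h'')$ for every $h \in \widehat{\Omega}$; and second, that for every $h \in \widehat\Omega$ with $h' \ne h''$ and every $a$-orbit $[\bfi]$ in $\widehat\bfI$, we do not have $\{\widehat{a}(h)', \widehat{a}(h)''\} \subseteq [\bfi]$.

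**First condition.** This is a case check over the three types of edges making up $\widehat\Omega$. For $h \in \Omega$ (that survived the deletion of loops at $[\bfim]$), $\widehat{a}(h) = a(h)$, and the identity $a(h)' = a(h')$, $a(h)'' = a(h'')$ is inherited from admissibility of $a$ on $(\bfI,\Omega)$; I need only note that $\widehat a$ restricted to $\widehat\bfI$ agrees with $a$, which is fine since $a$ preserves $[\bfim]$ and hence $\bfI - [\bfim]$. For a new edge of the form $l_1 h_k$ (with $h_k'' = l_1'$), I compute $\widehat a(l_1 h_k)' = (a(l_1)a(h_k))' = a(h_k)' = a(h_k')$ by the $\Omega$-case, and this equals $\widehat a((l_1h_k)') = \widehat a(h_k')$; similarly $\widehat a(l_1h_k)'' = a(l_1)'' = a(l_1'') = \widehat a((l_1 h_k)'')$. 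Here I also need that $a(h_k)'' = a(l_1)'$ so that $a(l_1)a(h_k)$ is again a legitimate composite edge in $\widehat\Omega$ — this follows by applying $a$ to $h_k'' = l_1'$. The edges $\overline{h_k}\, l_2$ are handled the same way. This step is routine bookkeeping.

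**Second condition.** This is where the real content lies, and it is exactly why Assumption~\eqref{graphassumption} was imposed. Suppose $h \in \widehat\Omega$ with $h' \ne h''$ and $\{\widehat a(h)', \widehat a(h)''\} \subseteq [\bfi]$ for some orbit $[\bfi]$ of $\widehat\bfI$. If $h \in \Omega$ then, using the first condition already proved, $\{a(h'), a(h'')\} \subseteq [\bfi]$, so $\{h', h''\} \subseteq a^{-1}([\bfi]) = [\bfi]$ (orbits are $a$-stable), contradicting admissibility of $a$ unless $h' = h''$. For the new composite edges, say $h = l_1 h_k$: its endpoints in $\widehat\bfI$ are $h_k'$ (in $[\bfip]$) and $l_1''$; after applying $\widehat a$ these lie in $[\bfip]$ and in the orbit of $l_1''$ respectively. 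For both to lie in the \emph{same} orbit $[\bfi]$ we would need $l_1''$ to lie in $[\bfip]$, i.e. the original edge $l_1$ has $l_1' = h_k'' \in [\bfim]$ and $l_1'' \in [\bfip]$; together with the edge $h_k$ (which has $h_k' \in [\bfip]$, $h_k'' \in [\bfim]$) this produces precisely the configuration forbidden by item~2 of~\eqref{graphassumption} — an edge $l = l_1$ with $l', l'' \in [\bfip]\cup[\bfim]$, $l' \in \{h_k', h_k''\}$ but $l'' \notin \{h_k', h_k''\}$. So no such $[\bfi]$ exists. The case $h = \overline{h_k}\, l_2$ is symmetric, using $l_2'' = h_k'' \in [\bfim]$.

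**Main obstacle.** The delicate point is the second condition for the composite edges: one has to translate the "same orbit" hypothesis back into a statement about the original edges $h_k$ and $l_1$ (or $l_2$) and recognize that it is exactly the forbidden pattern in~\eqref{graphassumption}(2); getting the orbit membership of each endpoint correct (which endpoint lands in $[\bfip]$, which in the orbit of $l_1''$ or $l_2'$) requires care, as does confirming that the newly-formed orbits of $\widehat a$ on $\widehat\bfI$ are just the old ones with $[\bfim]$ removed, so that "$a$-orbit in $\widehat\bfI$" is unambiguous. I would also double-check the edge cases where $h_k'$ or the other endpoint happens to be a loop in $\widehat\Omega$, to be sure the $h' \ne h''$ hypothesis is genuinely used.
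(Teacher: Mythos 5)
Your proof is correct and follows essentially the same route as the paper: a case check over the three edge types for compatibility with $'$ and $''$, and then the orbit condition for the composite edges $l_1h_k$ (and $\overline{h_k}l_2$) deduced from assumption (2) of \eqref{graphassumption}, with the loop case excluded by the hypothesis $h' \neq h''$ exactly as the paper does. Your contrapositive phrasing ("same orbit forces the forbidden configuration") is just the paper's direct argument ("$l_1'' \neq h_k'$ implies $l_1'' \notin [\bfip]$") run in reverse, and if anything you spell out the $h\in\Omega$ case of the orbit condition more explicitly than the paper's "it is clear."
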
  
\begin{proof}
    
It is clear that for any edge $h \in \Omega$,~ $\widehat{a}(h^{\prime}) = \widehat{a}(h)',~ \widehat{a}(h^{\prime\prime}) = \widehat{a}(h)''$ and $\{\widehat{a}(h)',~ \widehat{a}(h)''\} \nsubseteq [\bfi]$ for any $[\bfi] \in {\bf \widehat{I}} \backslash \widehat{a}$. Consider the edge $l_1h_k \in \widehat{\Omega}, ~~h_k'' = l_1'$. To show $\widehat{a}(l_1h_k)' = \widehat{a}((l_1h_k)')$. Notice that $(\widehat{a}(l_1h_k))' = (a(l_1)a(h_k))' = a(h_k)' = a(h_k')$ and $\widehat{a}((l_1h_k)') = \widehat{a}(h_k') = a(h_k')$.\\
Similarly, one can show that $\widehat{a}(l_1h_k)'' = \widehat{a}((l_1h_k)'')$. 
If $l_1'' = h_k'$ then there is nothing to prove since that will imply $(l_1h_k)' = (l_1h_k)''$.
Finally we show that $\widehat{a}(l_1h_k)'$ and $\widehat{a}(l_1h_k)''$ are in different $\widehat{a}-$orbits.
Suppose $l_1'' \neq h_k'$. By assumption (2) in Section 1.2, $l_1'' \notin [\bfip]$. From above, $\widehat{a}(l_1h_k)' = a(h_k')$ and $\widehat{a}(l_1h_k)'' = a(l_1'')$. Since $h_k'$ and $l_1''$ are in different $a-$orbits, we can conclude that $\widehat{a}(l_1h_k)'$ and $\widehat{a}(l_1h_k)''$ are in different $\widehat{a}-$orbits. Hence, the automorphism $\widehat{a}$ is admissible.
\end{proof}

\begin{proposition} Let $(I,  \cdot, \phi_1,  \phi_2)$ be a generalized Cartan datum. There exists a finite graph $(\bfI,  H,  ',  '')$ and an admissible automorphism $a$ of this graph such that  $(I,  \cdot, \phi_1,  \phi_2)$ is obtained from them by the construction in the second paragraph in Section 1.2. \end{proposition}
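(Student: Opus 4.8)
The plan is to prove the Proposition by an explicit construction, in the spirit of Lusztig's realization of a symmetric Cartan datum by a graph with admissible automorphism, but now also encoding the orbit sizes through $\phi_1$ and the loops through $\phi_2$. Concretely, I would realize each $i \in I$ by an $a$-orbit $[i]$ consisting of $\phi_1(i)$ vertices on which $a$ acts as a single cycle, attach $\phi_2(i)$ loops at each vertex of $[i]$ in an $a$-equivariant way, and join distinct orbits $[i], [j]$ by an $a$-equivariant collection of edges of the correct total size. One then checks that the generalized Cartan datum derived from this graph via items (1)--(4) of Section~\ref{Graph Structures} coincides, under the tautological bijection $\mathbf{I}/a \cong I$, with $(I, \cdot, \phi_1, \phi_2)$.

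For the vertices and loops: set $\mathbf{I} = \bigsqcup_{i \in I} [i]$ with $[i] = \{v^i_0, \dots, v^i_{\phi_1(i)-1}\}$ and let $a$ act by $a(v^i_k) = v^i_{k+1}$, indices read modulo $\phi_1(i)$. For each $i$ adjoin $\phi_2(i)$ loops at $v^i_0$ and propagate them by $a$, so that $a$ carries the set of loops at $v^i_k$ bijectively onto the set of loops at $v^i_{k+1}$; then every vertex of $[i]$ carries exactly $\phi_2(i)$ loops, and in the derived datum $\phi_1^{\mathrm{graph}}([i]) = \#[i] = \phi_1(i)$ and $\phi_2^{\mathrm{graph}}([i]) = \phi_2(i)$. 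Since item (3) of the construction \emph{defines} $[i]\cdot[i]$ to be $2(\phi_1(i) - \phi_1(i)\phi_2(i))$, which is exactly condition \eqref{cond1}, the diagonal entries of the form come out correctly with no further computation.

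The substantive step is the off-diagonal part. For distinct $i, j \in I$ put $m_{ij} = -\,i\cdot j \in \mb{N}$. Condition \eqref{cond2}, applied in both orders and using symmetry of ``$\cdot$'', gives $\phi_1(i) \mid m_{ij}$ and $\phi_1(j) \mid m_{ij}$; since the common multiples of two positive integers are precisely the multiples of their least common multiple, $\ell_{ij} := \operatorname{lcm}(\phi_1(i), \phi_1(j))$ divides $m_{ij}$, say $m_{ij} = r_{ij}\,\ell_{ij}$ with $r_{ij} \in \mb{N}$. I would then adjoin to $\Omega$ exactly $r_{ij}$ pairwise disjoint $a$-orbits of edges joining $[i]$ to $[j]$, each orbit generated by a single edge $e$ with $e' = v^i_0$ and $e'' = v^j_0$ under repeated application of $a$ (so the $s$-th edge of an orbit runs $v^i_{s} \to v^j_{s}$, each index read modulo the relevant orbit size). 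Such an orbit has size exactly $\ell_{ij}$, because $a^s$ fixes $e$ iff $\phi_1(i) \mid s$ and $\phi_1(j) \mid s$; hence there are $m_{ij}$ edges between $[i]$ and $[j]$ in total, and item (4) yields $[i]\cdot[j] = -m_{ij} = i\cdot j$. Admissibility of the resulting $a$ is immediate: loops are exempt from the admissibility condition, and every non-loop edge introduced joins two \emph{distinct} $a$-orbits, a property $a$ obviously preserves. Taking $H = \Omega$, the derived quadruple is $(I, \cdot, \phi_1, \phi_2)$, which proves the Proposition.

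The step to be handled with care — rather than a genuine obstacle — is the $a$-equivariant bookkeeping for the bipartite edges: checking that one $a$-orbit of such edges has size exactly $\operatorname{lcm}(\phi_1(i), \phi_1(j))$ and that $r_{ij}$ orbits can be chosen pairwise disjoint, which is automatic once one allows parallel edges in the abstract set $\Omega$. The only genuinely arithmetic input is the observation that the two divisibility conditions coming from \eqref{cond2} combine, via the lcm, into exactly the divisibility that makes $r_{ij}$ an integer; everything else is routine verification against the definitions in Section~\ref{Graph Structures}.
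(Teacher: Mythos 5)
Your construction is essentially the same as the paper's: realize each $i\in I$ by a cyclic $a$-orbit $D_i$ of $\phi_1(i)$ vertices, attach $\phi_2(i)$ $a$-equivariant families of loops (the paper's $\phi_2(i)$ copies of the diagonal orbit in $D_i\times D_i$), and join distinct orbits by $\frac{-i\cdot j}{\operatorname{lcm}(\phi_1(i),\phi_1(j))}$ $a$-orbits of edges of size $\operatorname{lcm}(\phi_1(i),\phi_1(j))$, following Lusztig for the loop-free part. Your write-up is correct, and in fact spells out two points the paper leaves implicit, namely the divisibility argument making the number of edge-orbits an integer and the admissibility check for $a$.
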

\begin{proof}
For each $i \in I$,  we consider the set $D_i$ such that $|D_i| = \phi_1(i)$ and a cyclic permutation $a: D_i \rightarrow D_i$.  Let $\bfI = \sqcup_{i \in I} D_i$ and let $a: \bfI \rightarrow \bfI$ be the permutation whose restriction to each $D_i$ is the permutation $a: D_i \rightarrow D_i$ considered above. 
The proof in the case when there are no loops can be found in \cite{Lus10} (Chap. 14). There are certain additions to be made in the case of graphs with loops and they are presented below. 

Choose $i \in I$ such that $\phi_2(i) \neq 0$. Let $K = \{( \alpha,  \alpha) ~|~ \alpha \in D_i\}$. Consider $a \times a: K \longrightarrow K$. Let $\rho_1$ be the $a$-orbit. This implies $|\rho_1|  = |D_i| = \phi_1(i)$. Let $H_i  = \displaystyle\sqcup_{\phi_2(i)} \rho_1$. Hence,  $|H_i| = \phi_2(i)\phi_1(i)$. We have,
\begin{equation*}
\begin{aligned} 
\bfI / a &= \{D_i ~|~ i \in I\},\\
D_i \cdot D_i &= 2(|D_i|  - |H_i| ) \\
&= 2(\phi_1(i) - \phi_1(i)\phi_2(i)) \\
&= i\cdot i,\\
D_i \cdot D_j &= -\{h \in H ~|~ h' , h'' \in \{D_i ,  D_j\} \} \\
&= - |H_{ij}| \\
&= - \dfrac{-i\cdot j}{{\text{lcm}(|D_i|, |D_j|  )}} \text{lcm}(|D_i|, |D_j|  ) \\
&= i \cdot j ~~ (i \neq j) .
\end{aligned}
\end{equation*}
where $H_{ij} = \ds\bigsqcup_{\frac{-i\cdot j}{\text{lcm~}(\phi_1(i), \phi_1(j))}} \rho$ ~~~~where $\rho$ is an orbit of the permutation $a: D_i \times D_j \longrightarrow D_i \times D_j$\\

Let $\widehat{\phi_1}$ and $\widehat{\phi_2}$ be the maps obtained from the constructed graph. 
To show that $\widehat{\phi_1} = \phi_1$ and $\widehat{\phi_2} = \phi_2$. We can see that,
\begin{equation*}
\begin{aligned}
\widehat{\phi_1}(i) &= \#[i] = |D_i|  = \phi_1(i)
\widehat{\phi_2}(i) &= \dfrac{|H_i|}{\widehat{\phi_1(i)}} = \dfrac{\phi_1(i)\phi_2(i)}{\phi_1(i)} = \phi_2(i).
\end{aligned}
\end{equation*}
This proves the proposition.
\end{proof} 

\begin{definition}
A Cartan datum $(I,  \cdot, \phi_1,  \phi_2)$ is said to be isomorphic to another Cartan datum $(J, \circ, \widehat{\phi_1}, \widehat{\phi_2})$ if there exists a bijection from $I$ to $J$ preserving the bilinear forms, $\phi_1 = \widehat{\phi_1}$ and $\phi_2 = \widehat{\phi_2}$. 
\end{definition}

\begin{lemma} Consider a graph $({\bf I},  \Omega, ~ ' : \Omega \rightarrow {\bf I}, ~ '' : \Omega \rightarrow {\bf I})$ with the corresponding generalized Cartan datum $(\bfI /a,  \cdot, \phi_1,  \phi_2)$. Then the edge contraction of the above Cartan datum $(\tilde{\bfI} /\tilde{a},  \cdot, \widetilde{\phi_1}, 
\widetilde{\phi_2)}$ is isomorphic to the Cartan datum $(\hat{\bfI} /\hat{a},  \circ, \widehat{\phi_1}, 
\widehat{\phi_2)}$ of the graph obtained from an edge contraction of the above graph.  
\end{lemma}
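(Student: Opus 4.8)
The plan is to exhibit the obvious bijection $f$ between the two index sets and then verify, one structure at a time, the three requirements in the Definition of isomorphism of Cartan data: that $f$ matches the bilinear forms, the values of $\phi_1$, and the values of $\phi_2$. Every $a$-orbit $[\bfi]$ with $[\bfi]\notin\{[\bfip],[\bfim]\}$ occurs in both $\widetilde{\bfI}/\widetilde a$ and $\widehat{\bfI}/\widehat a$: on the Cartan-datum side it survives the construction of Section~1.1, and on the graph side its vertices lie in $\widehat{\bfI}=\bfI-[\bfim]$ and still form a single $\widehat a$-orbit since $\widehat a$ is the restriction of $a$. Beyond these, $\widetilde{\bfI}/\widetilde a$ has one extra element, $[\bfi_0]=[\bfip]+[\bfim]$, and $\widehat{\bfI}/\widehat a$ has one extra element, the orbit $[\bfip]\subseteq\widehat{\bfI}$ (again an $\widehat a$-orbit). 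So I take $f$ to be the identity on the common orbits with $f([\bfi_0])=[\bfip]$; this is plainly a bijection, and the lemma reduces to comparing the three structures across $f$.

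Agreement of $\phi_1$ is immediate: orbit sizes do not change on the common orbits, and $\widetilde{\phi_1}([\bfi_0])=\phi_1([\bfip])=\#[\bfip]=\widehat{\phi_1}([\bfip])$ by the Lemma of Section~1.1. On the common orbits the bilinear form and $\phi_2$ are also easy: in passing from $\Omega$ to $\widehat\Omega$ every edge that is removed has an endpoint in $[\bfim]$, and every edge that is added is a composite $l_1h_k$ or $\overline{h_k}l_2$, each of which has an endpoint equal to $h_k'\in[\bfip]$. Consequently no edge running between two common orbits, and no loop at a common orbit, is created or destroyed, so $f$ matches $\cdot$ and $\phi_2$ there.

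The substance of the proof is the behaviour at $[\bfi_0]$. Write $n=\phi_1([\bfip])$. Since $a$ cyclically permutes $[\bfim]$ and $h_k''=a^k(e'')$, the vertices $h_1'',\dots,h_n''$ are distinct and exhaust $[\bfim]$, so each vertex of $[\bfim]$ is $h_k''$ for a unique $k$; hence, by the construction of Section~1.2, each edge of $\Omega$ with an endpoint in $[\bfim]$ other than the $h_k$ themselves is replaced by exactly one composite edge (namely $l\,h_k$ or $\overline{h_k}\,l$ for the matching $k$), and conversely. For a common orbit $[\bfj]$ this gives a bijection between the new edges joining $[\bfip]$ to $[\bfj]$ and the old edges joining $[\bfim]$ to $[\bfj]$; adjoining the old edges between $[\bfip]$ and $[\bfj]$, which are untouched, one gets $[\bfip]\circ[\bfj]=[\bfip]\cdot[\bfj]+[\bfim]\cdot[\bfj]=[\bfi_0]\cdot[\bfj]$, exactly as preservation of the bilinear form at this pair requires. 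For $\phi_2$ I would count the loops of $\widehat\Omega$ at a fixed vertex $v_0\in[\bfip]$: condition~(2) of \eqref{graphassumption} forces every $\Omega$-edge between $[\bfip]$ and $[\bfim]$ incident to $v_0$ to join $v_0$ to one fixed partner vertex $w_0\in[\bfim]$; an $a$-equivariance count shows there are $\tfrac{-[\bfip]\cdot[\bfim]}{\phi_1([\bfip])}$ of these, every one of them yields a loop at $v_0$ in $\widehat\Omega$, and precisely one — the edge $h_k$ with $h_k''=w_0$ — yields only the trivial composite $\overline{h_k}h_k$ rather than a genuine loop. Hence exactly $\tfrac{-[\bfip]\cdot[\bfim]}{\phi_1([\bfip])}-1$ loops appear at $v_0$, which is $\widetilde{\phi_2}([\bfi_0])$ by the Lemma of Section~1.1. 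Finally $[\bfi_0]\cdot[\bfi_0]=[\bfip]\circ[\bfip]$ needs no separate check, since both Cartan data satisfy \eqref{cond1} and we have just matched $\phi_1$ and $\phi_2$ at $[\bfi_0]$.

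The routine part is the common-orbit bookkeeping; I expect the main obstacle to be the loop count at $[\bfi_0]$ — confirming that condition~(2) of \eqref{graphassumption} is exactly what is needed to control which composites are loops, that the correspondence between the new composites and the old $[\bfim]$-incident edges is a genuine bijection (this is where the cyclic action of $a$ on $[\bfim]$, hence the hypothesis $\phi_1([\bfip])=\phi_1([\bfim])$, enters), and that it is exactly one trivial composite per vertex that produces the $-1$.
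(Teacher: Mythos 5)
Your proposal is correct and takes essentially the same route as the paper: both reduce the lemma to checking $[\bfip]\circ[\bfj]=[\bfi_0]\cdot[\bfj]$ for the surviving orbits $[\bfj]$, $\widehat{\phi_1}([\bfip])=\widetilde{\phi_1}(i_0)$, and $\widehat{\phi_2}([\bfip])=\widetilde{\phi_2}(i_0)$, with the diagonal value and the untouched orbits handled as trivial bookkeeping. The only difference is one of detail: you justify the counting identities (the bijection between composite edges of $\widehat{\Omega}$ and the $[\bfim]$-incident edges of $\Omega$, and the loop count at a fixed vertex using assumption (2) of Section 1.2, $a$-equivariance, and the exclusion of the contracted edge $h_k$) that the paper's proof simply asserts.
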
  
\begin{proof}
 First we need to check that $[i_+] \circ [j] = [i_0] \cdot j$ for every $[j] \neq [i_+], [j] \neq [i_0]$. 
Now by definition, 
\begin{equation*}
\begin{aligned}
[i_+] \circ [j] &= - \# \{h \in \widehat{\Omega} ~|~ h',  h'' \in [i_+] \cup [j],  h' \neq h'' \}\\
& = - \# \{h \in \Omega ~|~ h',  h'' \in [i_+] \cup [j],  h' \neq h'' \} - \# \{h \in \Omega ~|~ h',  h'' \in [i_-] \cup [j],  h' \neq h'' \} \\
&= [i_+] \cdot [j] + [i_-] \cdot [j]\\
 &= [i_0] \cdot [j]
\end{aligned}
\end{equation*}
Next, $\widehat{\phi_1}([i_+]) = \# [i_+] = \phi_1([i_+]) = \widetilde{\phi_1}([i_0])$.\\
Also, $\widehat{\phi_2}([i_+]) = \# \{ h \in \Omega ~|~ h' = h'' = j   \text{~for some fixed~} j \in [i_+]\} = \dfrac{-[i_+]\cdot[i_-]}{\phi_1(i_+)} - 1 = \widetilde{\phi_2}(i_0)$. The lemma is proved.
\end{proof}

\subsection{Root datum}

We define root datum associated with a generalized Cartan datum. 

\begin{definition} Root datum associated with $(I, \cdot, \phi_1, \phi_2)$ consists of:\\
1) Finitely generated free abelian groups $Y$ and $X$\\
2) A perfect bilinear pairing $\la \cdot, \cdot\ra: Y \times X \longrightarrow \mb{Z}$\\
3) An embedding $I \hookrightarrow X ~(i \mapsto i')$ and an embedding $I \hookrightarrow Y ~(i \mapsto i)$\\
4) $\la i, j'\ra = \frac{i\cdot j}{\phi_1(i)}~~\forall i, j \in I$
\end{definition}

The elements of $Y$ are called roots and the elements of $X$ are called co-roots. Let $(\widehat{I}, \cdot, \widehat{\phi_1}, \widehat{\phi_2})$ be the edge contraction of $(I, \cdot, \phi_1, \phi_2)$ along the pair $(i_+, i_-)$. 
Embedding $I \hookrightarrow X$ defines a group homomorphism $\mb{Z}[I] \longrightarrow X$. Define an embedding $\widehat{I} \hookrightarrow X$ to be the restriction of the homomorphism $\mb{Z}[I] \longrightarrow X$ to $\widehat{I}$, i.e. $i \mapsto i'$ if $i \in \whI \backslash {i_0}$ and $i_0 \mapsto (i_+)' + (i_-)'$. 
Embedding $I \hookrightarrow Y$ defines a group homomorphism $\mb{Z}[I] \longrightarrow Y$. Define an embedding $\widehat{I} \hookrightarrow Y$ to be the restriction of the homomorphism $\mb{Z}[I] \longrightarrow Y$ to $\widehat{I}$, i.e. $i \mapsto i$ if $i \in \whI \backslash {i_0}$ and $i_0 \mapsto i_+ + i_-$. It is easy to check that the collection ($Y, X, \la \cdot, \cdot \ra , \widehat{I} \hookrightarrow Y, \widehat{I} \hookrightarrow X $) is a root datum associated to the Cartan datum ($\whI, \cdot$).\\

A root datum as above is called $X-$regular (resp. $Y-$regular) if the image of the embedding $I \subset X$ (resp. $I \subset Y$) is linearly independent in X (resp. Y). A $Y-$regular root datum is called simply connected root datum and a $X-$regular root datum is called adjoint root datum.

\subsection{Weyl Groups}
Let ($Y, X, \la \cdot, \cdot \ra , I \hookrightarrow Y, I \hookrightarrow X $) be a simply connected root datum associated to $(I, \cdot)$. We define a reflection $s_i: Y \rightarrow Y$ by $s_i(y) = y - \la y, i' \ra i$ for each $i \in I$. The Weyl group is a group generated by these reflections and is denoted by $W_I$. We call $W_I$ to be the Weyl group associated to the Cartan datum $(I, \cdot, \phi_1, \phi_2)$. Let $(\whI, \cdot, \widehat{\phi_1}, \widehat{\phi_2})$ be the edge contraction of $(I, \cdot, \phi_1, \phi_2)$ along the pair $\{i_+, i_-\}$. Let $W_{\whI}$ be the Weyl group associated to $(\whI, \cdot, \widehat{\phi_1}, \widehat{\phi_2})$. 

\begin{lemma}
The function $\psi: W_{\whI} \longrightarrow \text{~Aut}(Y)$ defined as $s_i \mapsto s_i$ for all $i \in \widehat{I} - {i_0}$ and $s_{i_0} \mapsto s_{i_+}s_{i_- + \widehat{\phi_2}(i_0)i_+} s_{i_+}$ defines a group embedding where $s_{i_- + \widehat{\phi_2}(i_0)i_+}(y) = y - \la y, i'_- + \widehat{\phi_2}(i_0)i'_+  \ra (i_- + \widehat{\phi_2}(i_0)i_+)$. \end{lemma}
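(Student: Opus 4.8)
The plan is to reduce the statement to a comparison of two realizations of the Weyl group attached to the same Cartan datum. First I would record the elementary pairings used throughout. Since $\phi_2(i_+)=\phi_2(i_-)=0$ and $\phi_1(i_+)=\phi_1(i_-)$, condition~\eqref{cond1} gives $i_+\cdot i_+=i_-\cdot i_-=2\phi_1(i_+)$, so $\la i_+,(i_+)'\ra=\la i_-,(i_-)'\ra=2$, while $\la i_+,(i_-)'\ra=\la i_-,(i_+)'\ra=\tfrac{i_+\cdot i_-}{\phi_1(i_+)}=:m$, with $m\in-\mb{N}$ and $m\neq 0$ by \eqref{mainassumption}; by the preceding lemma $\widehat{\phi_2}(i_0)=-m-1\geq 0$. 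Next recall from Section~2.3 that the embeddings $\widehat{I}\hookrightarrow Y$ and $\widehat{I}\hookrightarrow X$ (with $i_0\mapsto i_++i_-$ and $i_0\mapsto(i_+)'+(i_-)'$) make $(Y,X,\la\cdot,\cdot\ra)$ a root datum for $(\widehat{I},\cdot,\widehat{\phi_1},\widehat{\phi_2})$, and that it is again \emph{simply connected}, since linear independence of $I$ in $Y$ forces linear independence of $\widehat{I}=(I\setminus\{i_+,i_-\})\cup\{i_++i_-\}$. Write $r_i\in\text{Aut}(Y)$ for the reflection of this root datum: $r_i=s_i$ for $i\in\widehat{I}\setminus\{i_0\}$, and $r_{i_0}(y)=y-\la y,(i_+)'+(i_-)'\ra(i_++i_-)$.

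The crux is the identity
\[ s_{i_+}\,s_{i_-+\widehat{\phi_2}(i_0)i_+}\,s_{i_+}=r_{i_0}\qquad\text{in }\text{Aut}(Y). \]
I would prove it by expanding the left-hand side on an arbitrary $y\in Y$, using $\la i_+,(i_+)'\ra=2$ and $\la i_+,(i_-)'+\widehat{\phi_2}(i_0)(i_+)'\ra=m+2\widehat{\phi_2}(i_0)=-m-2$. After collecting terms, the coefficients of $i_+$ and of $i_-$ come out equal, so the image is $y-c(y)(i_++i_-)$ with $c(y)=\la y,(i_-)'+(\widehat{\phi_2}(i_0)+m+2)(i_+)'\ra=\la y,(i_-)'+(i_+)'\ra$, precisely because $\widehat{\phi_2}(i_0)=-m-1$; this is $r_{i_0}(y)$. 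It is this cancellation, forced by the exact value of $\widehat{\phi_2}(i_0)$ from the previous lemma, that singles out $i_-+\widehat{\phi_2}(i_0)i_+$ as the correct conjugating vector; in the one-edge case $m=-1$ one has $\widehat{\phi_2}(i_0)=0$ and the identity degenerates to the familiar $s_{i_+}s_{i_-}s_{i_+}=s_{i_0}$.

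With this identity, $\psi$ is exactly the map sending each distinguished generator $s_i$ of $W_{\widehat{I}}$ to the reflection $r_i$ of the alternative (still simply connected) realization $(Y,X,\la\cdot,\cdot\ra)$ of $(\widehat{I},\cdot,\widehat{\phi_1},\widehat{\phi_2})$. The relations among the $s_i$ defining $W_{\widehat{I}}$ depend only on the Cartan datum: they are generated by the rank-$\leq 2$ relations, which are governed by the entries $\la i,j'\ra=\tfrac{i\cdot j}{\phi_1(i)}$ common to both realizations, and the reflection representation of $W_{\widehat{I}}$ on a simply connected root datum is faithful. This is Lusztig's construction \cite{Lus10}, carried over to the loop orbit $i_0$ (for which $r_{i_0}$ fails to be an involution, mirroring the fact that $s_{i_0}$ is not an involution in $W_{\widehat{I}}$). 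Hence $s_i\mapsto r_i$ extends to a group isomorphism $W_{\widehat{I}}\xrightarrow{\ \sim\ }\la r_i:i\in\widehat{I}\ra\subseteq\text{Aut}(Y)$, and $\psi$ is the composition of this isomorphism with the inclusion into $\text{Aut}(Y)$; in particular $\psi$ is an embedding.

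The main obstacle is not the conjugation identity (routine, though the bookkeeping must be done with care) but the clean justification that the two realizations of $W_{\widehat{I}}$ are isomorphic in the presence of the loop orbit $i_0$: one must confirm that the relations coming from the rank-$\leq 2$ parabolic subdata involving $i_0$, which are no longer ordinary Coxeter braid relations, transfer between the standard simply connected datum and $(Y,X,\la\cdot,\cdot\ra)$, and that faithfulness persists on the simply connected side. Spelling this out, or else giving a direct length/faithful-action argument for the injectivity of $\psi$, is where the real work lies.
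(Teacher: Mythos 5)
Your proposal is correct and takes essentially the same approach as the paper: both reduce the lemma to the single identity $s_{i_+}s_{i_-+\widehat{\phi_2}(i_0)i_+}s_{i_+}=s_{i_0}$ on $Y$ and verify it by direct computation with the pairings $\la i_{\pm},(i_{\pm})'\ra$ and $\widehat{\phi_2}(i_0)=-\tfrac{i_+\cdot i_-}{\phi_1(i_+)}-1$ (the paper organizes this as five intermediate evaluations, you expand on a general $y$; the cancellation you describe is exactly the one the paper exploits). The only difference is that you explicitly flag, without completing, the justification that this identity yields an embedding (realization-independence and faithfulness of the reflection representation for the contracted datum), a point the paper likewise passes over with its opening ``we only need to show.''
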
  
\begin{proof}
 We only need to show that $s_{i_0} = s_{i_+}s_{i_- + \widehat{\phi_2}(i_0)i_+} s_{i_+}$. We divide the proof in 5 steps:\\ 
\underline {Step 1}
\begin{equation*}
\begin{aligned}
s_{i_-}(i_+) &= i_+ - \la i_+, i_-' \ra i_- \\ 
&= i_+ - \frac{i_+\cdot i_-}{\phi_1(i_+)} i_- \\
&= i_+ + (\widehat{\phi_2}(i_0) + 1)i_- \\
&= i_0 + \widehat{\phi_2}(i_0)i_- .\\
\end{aligned}
\end{equation*}
\underline {Step 2}
\begin{center}
 Similarly, we can show,  $s_{i_+}(i_-) = i_0 + \widehat{\phi_2}(i_0)i_+$. \\
\end{center}
\underline {Step 3} 
\begin{equation*}
\begin{aligned}
s_{i_+}(i_0) &= i_0 - \la i_0, i_+' \ra i_+ \\
&= i_0 - \frac{i_0 \cdot i_+}{\phi_1(i_0)} i_+ \\
&= i_0 - (1 - \widehat{\phi_2}(i_0))i_+ \\
&= (i_0 - i_+) + \widehat{\phi_2}(i_0)i_+  \\
&= (i_0 - i_+) + \widehat{\phi_2}(i_0)i_+ .
\end{aligned}
\end{equation*}
The third equality holds because\\
\begin{equation*}
\begin{aligned}
\frac{i_0\cdot i_+}{\phi_1(i_0)} = \frac{(i_+ + i_-)\cdot i_+}{\phi_1(i_0)} = \frac{i_+ + i_+}{\phi_1(i_0)} + \frac{i_+ \cdot i_-}{\phi_1(i_+)}  = 2 - \widehat{\phi_2}(i_0) - 1 = 1 - \widehat{\phi_2}(i_0).
\end{aligned}
\end{equation*}
\underline {Step 4}
 \begin{equation*}
\begin{aligned}
 s_{i_- + \widehat{\phi_2}(i_0)i_+}(i_+) &= i_+ - \la i_+ , i_-' + \widehat{\phi_2}(i_0)i_+'  \ra (i_- + \widehat{\phi_2}(i_0)i_+) \\
 &= i_+ -  ( \la  i_+ , i_-' \ra     +  \widehat{\phi_2}(i_0) \la i_+ , i_+'  \ra    ) (i_- + \widehat{\phi_2}(i_0)i_+) \\
 &= i_+ + (1 - \widehat{\phi_2}(i_0)) (i_- + \widehat{\phi_2}(i_0)i_+)\\
 &  = i_0 + \widehat{\phi_2}(i_0)(i_+ - i_-) - \widehat{\phi_2}^2(i_0)i_+  .
 \end{aligned}
\end{equation*}
\underline {Step 5}\\
Applying $s_{i_+}$ to Step 4, we get, 
\begin{equation*}
\begin{aligned}
s_{i_+}s_{i_- + \widehat{\phi_2}(i_0)i_+} &= (i_0 - i_+) + \widehat{\phi_2}(i_0)i_+ + \widehat{\phi_2}(i_0)(-i_+ - i_0 - \widehat{\phi_2}(i_0)i_+ + \widehat{\phi_2}^2(i_0) i_+ \\
&= i_0 - i_+ - \widehat{\phi_2}(i_0) i_0.
\end{aligned}
\end{equation*}
The first equality is because of step 3. Combining all the above steps we get, 
\begin{equation*}
\begin{aligned}
s_{i_+}s_{i_- + \widehat{\phi_2}(i_0)i_+} s_{i_+}(y) &= s_{i_+}s_{i_- + \widehat{\phi_2}(i_0)i_+} (y - \la y , i_+' \ra i_+ ) \\
&= s_{i_+}[y - \la  y , i_-' + \widehat{\phi_2}(i_0)i_+'   \ra (i_- + \widehat{\phi_2}(i_0)i_+) - \la  y , i_+' \ra s_{i_- + \widehat{\phi_2}(i_0)i_+} (i_+)] \\
&= y - \la y , i_+' \ra i_+ - \la y ,   i_-' + \widehat{\phi_2}(i_0)i_+'  \ra i_0  - \la y , i_+' \ra (i_0 - i_+ - \widehat{\phi_2}(i_0) i_0 ) \\
&= y - \la y , i_-' + i_+' \ra i_0 \\
&= y + \la y , i_0 \ra i_0 \\
&= s_{i_0}(y) .
\end{aligned}
\end{equation*}
Third equality is from using step 5. This proves the lemma.
\end{proof}

\begin{example} Note that the above map is not an embedding into the Weyl group $W_I$ as demonstrated by the following example. \end{example}

\begin{figure}[ht]
\begin{minipage}{0.5\linewidth} 
  \centering
  \begin{tikzpicture}
  \node[ellipse, minimum width=1.8cm, minimum height=4.8cm, draw] (ellipse1) at (0,0) {};
  \node[circle, fill, inner sep=1pt] (dot11) at (0, 1) {};
  \node[circle, fill, inner sep=1pt] (dot10) at (0, -1) {};
  
  \node[ellipse, minimum width=1.8cm, minimum height=4.8cm, draw] (ellipse2) at (4,0) {};
  \node[circle, fill, inner sep=1pt] (dot21) at (4, 1) {};
  \node[circle, fill, inner sep=1pt] (dot20) at (4, -1) {};

  \draw[blue, -,postaction={decorate,decoration={markings,mark=at position 0.5 with {\arrow[thick]{stealth};}}}] (dot11) -- (dot21);
  \draw[blue, -,postaction={decorate,decoration={markings,mark=at position 0.5 with {\arrow[thick]{stealth};}}}] (dot10) -- (dot20);
  \draw[red, -,postaction={decorate,decoration={markings,mark=at position 0.5 with {\arrow[thick]{stealth};}}}] (dot11) to[out=35, in=145] (dot21);
  \draw[blue, -,postaction={decorate,decoration={markings,mark=at position 0.5 with {\arrow[thick]{stealth};}}}] (dot11) to[out=-35, in=-145] (dot21);
  
  \draw[blue, -,postaction={decorate,decoration={markings,mark=at position 0.5 with {\arrow[thick]{stealth};}}}] (dot10) to [out=-35, in=-145] (dot20);
  \draw[red, -,postaction={decorate,decoration={markings,mark=at position 0.5 with {\arrow[thick]{stealth};}}}] (dot10) to[out=35, in=145] (dot20);

  \end{tikzpicture}
\end{minipage}%
\begin{minipage}{0.6\linewidth} 
  \centering
  \begin{tikzpicture}
  \node[ellipse, minimum width=1.8cm, minimum height=4.8cm, draw] (ellipse1) at (0,0) {};
  \node[circle, fill, inner sep=1pt] (dot11) at (0, 1) {};
  \node[circle, fill, inner sep=1pt] (dot1-1) at (0, -1) {};

  \draw[blue, -,postaction={decorate,decoration={markings,mark=at position 0.5 with {\arrow[thick]{stealth};}}}] (dot11) to[out=210, in=150, looseness=140] (dot11);
  
  \draw[blue, -,postaction={decorate,decoration={markings,mark=at position 0.5 with {\arrow[thick]{stealth};}}}] (dot11) to[out=200, in=160, looseness=110] (dot11);
 
  \draw[blue, -,postaction={decorate,decoration={markings,mark=at position 0.5 with {\arrow[thick]{stealth};}}}] (dot1-1) to[out=210, in=150, looseness=140] (dot1-1);
  
  \draw[blue, -,postaction={decorate,decoration={markings,mark=at position 0.5 with {\arrow[thick]{stealth};}}}] (dot1-1) to[out=200, in=160, looseness=110] (dot1-1);

  \end{tikzpicture}
\end{minipage}%
\caption{ { \it The first graph is before the edge contraction and the second graph is after the edge contraction. The red edges denote the edges along which the contraction takes place. The blue edges in the first graph form loops after the contraction.}} \label{weyl example figure}
\end{figure}

Consider the graph with $I = \{i_+ , i_-\} $ as shown in Figure \ref{weyl example figure} with the first ellipse denoting $[i_+]$ and the second $[i_-]$.
After applying an admissible automorphism $a$ we get a Cartan datum $(I/a, \cdot)$ abbv. as $(I , \cdot)$. Let $Y = \mb{Z}[I]$ and $X = \text{~Hom}(Y , \mb{Z})$ and $\la \cdot, \cdot \ra : Y \times X \longrightarrow \mb{Z}$ be defined as $\la y , x'\ra = x(y)$,  $I \hookrightarrow Y$, $I \hookrightarrow X$ as  $i(j') = \frac{i\cdot j}{\phi_1(i)}~~\in \mb{Z}$. We will show that $s_{i_- + \widehat{\phi_2}(i_0)i_+} \notin W_I$ where $\widehat{\phi_2}(i_0) = 2$. Now,
\begin{equation*}
\begin{aligned}
s_{i_- + 2i_+}(i_+) &= i_+ - \frac{i_+(i_- + 2i_+)}{\phi_1(i_+)} (i_- + 2i_+)\\
& = i_+ - \frac{-6 + 8}{2}(i_- + 2i_+) \\
& = i_+ - (i_- + 2i_+) \\
&= -i_+ - i_-.
\end{aligned}
\end{equation*}

Note that $s_{i_-}(i_+) = i_+ + 3i_-$. Let $s_{i_-}(i_+) = ai_+ + bi_-$ where we assume $b > a > 0$. This implies $s_{i_+}s_{i_-}(i_+) = -ai_+ + b(i_- + 3i_+) = (3b - a)i_+ + bi_-$. Clearly, both $3b - a$ and $b$ are positive. Further, $s_{i_-}s_{i_+}s_{i_-}(i_+) = (3b - a) (i_+ + 3i_-) - bi_- = (3b - a)i_+ + (9b - 3a - b)i_-  = (3b - a)i_+ + (8b - 3a)i_-$. Clearly, $3b - a > 0$, $8b - 3a > 0$ and $8b - 3a > 3b - a$. The last equation indicates that we have an expression $\alpha i_+ + \beta i_-$ where $\beta > \alpha$ and the next step will be to compose with the reflection $s_{i_+}$. Therefore, after repeatedly applying $s_{i_+}, s_{i_-}$ alternatively (starting with $s_{i_-}$), we always get $\alpha, \beta > 0$ and hence never negative. \\

Now consider the case where we begin with $s_{i_+}$. Notice that,\\
\begin{equation*}
\begin{aligned}
s_{i_+}(i_+) &= - i_+, \\
s_{i_-}s_{i_+}(i_+) &= - (i_+ + 3i_-),\\
s_{i_+}s_{i_-}s_{i_+}(i_+) &= - (8i_+ + 3i_-).
\end{aligned}
\end{equation*}

This case is similar to the one considered above, in the sense that we get an expression of the form $-(ai_+ + bi_-)$ such that $ ~ b > a > 0$ (except for the first equation). 
If we show that the `$a$' part strictly increases (in absolute terms) then we are done. Let $s_{i_+}(i_+) = - i_+ =  \gamma$. Then $s_{i_-}(\gamma)$ will be of the form $-(ai_+ + b_-)$ where $b > a > 0$ as before. Further, $s_{i_+}s_{i_-}(\gamma) = - [(3a - b)i_+ + bi_-]$ and note the increase in the value of the coefficient of $i_+$, since $3b - a > a $. 
Now $s_{i_-}s_{i_+}s_{i_-}(\gamma) = -[(3b - a)i_+ + (8b - 3a)i_-]$. Here the coefficient of $i_+$ remains the same. We again end with $8b - 3a > 3b - a > 0$. 
Also, since we have $a = 1$ and $b = 3$ such that $3b - a = 9 -1 = 8 \neq 1$. In the following iteration the coefficient of $i_+$ must be greater than 8. Therefore, the coefficient of $i_+$ in this case can never be $-1$. Hence, $s_{i_- + 2i_+} \notin W_I$.\\

\section{Embedding among Hall Algebras associated to Quivers with Loops}

This section starts with an introduction to representation spaces of quivers with loops. Using Lusztig's induction and restriction diagram, we give a formulation of Hall algebra associated to quivers with loops and prove the embedding among Hall algebras induced by an edge contraction. We end the section by constructing a split short exact sequence of Hall algebras. 

\subsection{Quiver Representation Spaces}
Let $p$ be a prime number and $q = p^e$ for some $e \in \mathbb{N}\backslash \{0\}$. 

Consider a finite oriented graph ($\bfI, \Omega, ', ''$) such that $|\bfI| = n$ and let $a$ be the admissible automorphism as in section (1.2). Let $\mathcal{V}_a$ be a set of $\bfI$-graded $K$-Vector spaces $\bfV = \bigoplus_{\bfi \in \bfI} \bfV_{\bfi}$ where $K$ is the algebraic closure of a finite field $\mathbb{F}_p$ and $a: \bfV \longrightarrow \bfV$ is a linear map such that $a(\bfV)_{\bfi} = \bfV_{a(\bfi)}$ and $a^j|_{\bfV_{\bfi}} = 1|_{\bfV_{\bfi}} ~\forall~ \bfi \in \bfI$ and $\forall~j \geq 1$ such that $a^j(\bfi) = \bfi$. Now to this oriented graph we define a representation space
\begin{eqnarray*} 
\ev = \oplus_{h \in \Omega} \text{Hom}(V_{h^{\prime}}, V_{h^{\prime\prime}}).
\end{eqnarray*}

The algebraic group $\gv = \prod_{\bfi \in \bfI} \text{GL}(\bfV_{\bfi})$ acts on the $\ev$ by conjugation, i.e., for any $g \in \gv$ and $x = (x_h)_{h \in \Omega} \in \ev$~ $(g.x)_h = g_{h^{\prime\prime}}x_hg_{h^{\prime}}^{-1}$. 

Let $\text{Fr}: K \longrightarrow K$ be the Frobenius map defined by raising the elements to a power of $q$. This induces natural Frobenius maps on $\ev$ and $\gv$, denoted by the same notation, by raising the entries of the matrix representation of the above spaces to the power of $q$. This is the same Frobenius map as defined in \cite{Lus98}.

Let $I = \bfI/a$ be the $a$-orbits of $\bfI$ from section (1.2). Let $\nu = \sum_{i\in I}v_i \in \mathbb{N}[I]$. Then $\mathcal{V}_{I, \nu} = \{\bfV \in \mathcal{V}_a ~|~ \text{dim} \bfV_{\bfi} = \nu_i ~\forall \bfi \in i, i \in I = \bfI/a   \}$.

The admissible automorphism $a$ induces an automorphism on $\ev$ and $\gv$ by permuting the edges and the grading on $\bfV$ respectively, i.e. $a: \ev \longrightarrow \ev$ is defined as $a(x_h)_{h \in \Omega} = (x_{a(h)})_{h \in \Omega}$ and $a: \gv \longrightarrow \gv$ given by $a(g_1, g_2, ..., g_n)(v_1, v_2, ..., v_n)$ $= (a(g_1)a(v_1), a(g_2)a(v_2),... , a(g_n)a(v_n))$.

The maps Fr and $a$ are commuting automorphisms on $\ev$ and $\gv$. Let $\bfF = \text{Fr}\circ a$ and call it the twisted Frobenius as seen in \cite{Li23}. Let $\evF$ and $\gvF$ be the fixed points under the action of $\bfF$. Hence $\gvF$ acts on $\evF$ induced by the action of $\gv$ on $\ev$. 

We restrict the further discussion to a graph satisfying the conditions in (\ref{graphassumption}). Let $\whI = I - \{i_+, i_-\} \cup \{i_0\}$. Fix $e \in \Omega$ such that $e^{\prime} \in i_+$ and $e^{\prime\prime} \in i_-$.
Let $\evh = \{ x \in \ev ~|~ x_{h} \text{~is an isomorphism~} \forall h = a^k(e) \text{~for some~} 1 \leq k \leq \phi_1(i_+)   \}$ where $\bfV \in \mathcal{V}_{I, \nu}$ such that dim $V_{i_+}$ $=$ dim $V_{i_-}.$

Clearly, the space $\evh$ is a $\gv$-invariant subvariety of $\ev$. Moreover, we can naturally define a free action of the subgroup $\gv^{[\bfim]} = \prod_{\bfi \in [\bfim]} \text{GL}(\bfV_i)$ of $\gv$ on $\evh$. 

Consider the  quotient map ${\bf q_{V}}: \evh \longleftrightarrow \gv^{[\bfim]} \backslash \evh$. Let $\widehat{\bfV} = \oplus_{\bfi \in \bfI - [\bfim]} \bfV_{\bfi}$. Define a morphism of varieties:\\
$\mu_{\nu}: \evh \longrightarrow \evhat$~,~ $x \mapsto \widehat{x}~~$ by
\begin{equation}
\begin{aligned}
    \widehat{x}_h & = x_h  & \quad &\text{if } h \in \widehat{\Omega} \cap \Omega, \\  
    \widehat{x}_{l_1h_k} & = x_{l_1}x_{h_k} & \quad &\text{if } h_k^{\prime\prime} = l_1^{\prime}, \\  
    \widehat{x}_{\overline{h_k}{l_2}} & = x_{h_k}^{-1}x_{l_2} & \quad &\text{if } h_k^{\prime\prime} = l_2^{\prime\prime}.  
\end{aligned}
\myeqn{} \label{hatmap}
\end{equation}

There is also a natural projection $~~\widehat{}: \gv \longrightarrow {\bf G_{\widehat{V}}}$ by forgetting the $[\bfim]$ components. It is straightforward to check the compatibility relation: $\mu_{\nu}(g.x) = \widehat{g}.\mu_{\nu}(x)~~\forall g \in \gv, ~x \in \evh$. This relation implies that the map $\mu_{\nu}$ factors through the quotient map ${\bf q_{V}}$:
\[
  \begin{tikzcd}[row sep=huge, column sep = huge]
    \evh \arrow{r}{{\bf q_{V}}} \arrow[swap]{dr}{\mu_{\nu}} & \gv^{[\bfim]} \backslash \evh \arrow{d}{\widetilde{\mu}_{\nu}} \\
     & \evhat
  \end{tikzcd}
  \myeqn{}
\]
Following \cite{Li23} (Lemma 2.1.1), we can prove that $\widetilde{\mu}_{\nu}$ is an isomorphism and we hence $\mu_{\nu}$ gets identified with the quotient map ${\bf q_{V}}$.\\
If $\bfV \in \mathcal{V}_{I, \nu}$, then the twisted Frobenius map $\bfF$ restricts to a twisted Frobenius $\bfF$ on $\evh$. Similarly, we can define a twisted Frobenius ${\bf \widehat{F}}$ on $\evhat$ and $\gvhat$. The twisted Frobenius is further compatible with $\mu_{\nu}$, and hence $\mu_{\nu}$ induces a map, again call it $\mu_{\nu}$, from $\evhF$ to $\evhatF$. Due to the compatibility of the projection map $\gvF \longrightarrow \gvhatF$, we have the following commutative diagram:
\[
  \begin{tikzcd}[row sep=huge, column sep = huge]
    \evhF \arrow{r}{{\bf q_{V}}} \arrow[swap]{dr}{\mu_{\nu}} & \gv^{[\bfim], \bfF} \backslash \evhF \arrow{d}{\cong ~\widetilde{\mu}_{\nu}} \\
     & \evhatF
  \end{tikzcd}
  \myeqn{}
\]

Like \cite{Li23}, we will also call $\mu_{\nu}$, a contraction map of $\evF$ along $\{[\bfip], [\bfim]\}$.\\

Having defined $\mu_\nu$, we can also define the inclusion map, denoted by $j_\nu$, from $\evh$ to $\ev$. As it is compatible with the twisted Frobenius, we get a map from $\evhF$ to $\evF$ again denoted the by same notation.

\subsection{Lusztig's Induction and Restriction Diagram}
Let $\bfV$ be an $\bfI$ graded vector space over $K$. We can decompose $\bfV$ as $\bfV = \bfT \oplus \bfW$. Let $\bfS_{\bfW} = \{x \in \ev ~|~ x_h(\bfW_{h^{\prime}}) \subseteq \bfW_{h^{\prime\prime}} \}$, i.e., $W$ is $x$-invariant. Hence we can define $x^{\bfW} \in \ew$ as the restriction of $x \in \bfS_{\bfW}$ to $\bfW$. We can further define $x^{\bfT} \in \ew$ by passage through the quotient $\bfV / \bfW$. Following \cite{Lus10}, we have the following restriction diagram,
\begin{equation}
\ev \xleftarrow{~~~\iota~~~} \bfS_{\bfW} \xrightarrow{~~~\kappa~~~} \et \times \ew
    \myeqn{}
\end{equation}
where $\iota$ is the inclusion map and $\kappa$ is defined by $x \mapsto (x^{\bfT}, x^{\bfW}) $. 

Now to define the induction diagram, we need a stabilizer of $\bfW$ in $\gv$. Call it $Q$. $Q$ is a parabolic subgroup of $\gv$. We denote by $U$ the unipotent radical of $Q$. Now any element of $Q$ can be written as a block matrix:
$\begin{bmatrix}
\begin{array}{c|c}
\text{Morphism from~} T {~to~} T & 0 \\ \hline
\text{Morphism from~} T {~to~} W & \text{Morphism from~} W {~to~} W \\
\end{array}
\end{bmatrix}$\\
This induces a subjective map from $Q$ to $\gt \times \gw$ with kernel $U$. Hence, $Q/U \cong \gt \times \gw$.

Now, we can define the action of $Q$ on the set $\gv \times {\bf S_W}$ by $b.(g, x) = (gb^{-1}, b.x)$ where the action on the second component is the restriction of the action of $\gv$ on $\ev$. $U$ also acts on $\gv \times {\bf S_W}$ via restriction. Let $\bfE'$ and $\bfE''$ be the orbits of the action of $U$ and $Q$ on $\gv \times {\bf S_W}$ respectively. Let $p_1$ be the map from $\bfE'$ to $\et \times \ew$ defined as $[g, x]_U \mapsto (x^\bfT, x^\bfW)$. $p_2$ is the map that takes the $U$-orbit to the corresponding $Q$-orbit. Note that $p_2$ is a $Q/U (\cong \gt \times \gw)$ principal bundle. Further, $p_3$ maps an element of the $Q$-orbit, $[g, x]_Q$, to $g.x$. Combining these maps gives rise to the induction diagram shown below: 
\begin{equation}
    \et \times \ew \xleftarrow{~~~p_1~~~} \bfE' \xrightarrow{~~~p_2~~~} \bfE'' \xrightarrow{~~~p_3~~~} \ev
    \myeqn{}
\end{equation}
Note the the restriction and induction diagrams are compatible with the twisted Frobenius $\bfF$ provided $\bfV, \bfT, \bfW$ are in $\mathcal{V}_a$. So we have the following twisted induction and restriction diagrams with the maps being restriction of the corresponding maps in the restriction and induction diagram:
\begin{equation}
\evF \xleftarrow{~~~\iota~~~} \bfS^{\bfF}_{\bfW^\bfF} \xrightarrow{~~~\kappa~~~} \etF \times \ewF
    \myeqn{}
\end{equation}
\begin{equation}
    \etF \times \ewF \xleftarrow{~~~p_1~~~} \bfE'^{\bfF} \xrightarrow{~~~p_2~~~} \bfE''^{\bfF} \xrightarrow{~~~p_3~~~} \evF
    \myeqn{} \label{induction}
\end{equation}

Suppose $\nu = \tau + \omega$ where $\nu = \text{dim~} \bfV$, $\tau = \text{dim~} \bfT$ and $\omega = \text{dim~} \bfT$, each compatible with the grading. Let $(\bfV, \bfT, \bfW)$ be the vector spaces in $\mathcal{V}_{I, \nu} \times \mathcal{V}_{I, \tau} \times \mathcal{V}_{I, \omega}$. Fix this triple. 
We can extend the restriction diagram by introducing the $\heart$ and edge contracted (or ~$\widehat{}$~~) versions of $\iota$ and $\kappa$, denoted by $\iota^{\heart}$, $\kappa^{\heart}$,  $\widehat{\iota}$ and $\widehat{\kappa}$ respectively. Here $\iota^{\heart}$ and $\kappa^{\heart}$ are just the restrictions of $\iota$ and $\kappa$ respectively. 
Following, \cite{Li23}, we have the extended restriction diagram:
\[
\begin{tikzcd}[row sep=huge, column sep = huge]
\evF  &  {\bfS_{\bfW}^{\bfF}}  \arrow[r,"\kappa"] \arrow[l, swap, "\iota"] &
\etF \times \ewF
\\
\evhF \arrow[u,"j_\nu"] \arrow[d, swap, "\mu_\nu"] &  {\bfS_{\bfW}^{\heart,\bfF}} \arrow[u,"j_\nu'"] \arrow[d, swap, "\mu_\nu'"] \arrow[r,"\kappa^{\heart}"] \arrow[l, swap, "\iota^{\heart}"] &
\ethF \times \ewhF \arrow[u, swap, "j_\tau \times j_\omega"] \arrow[d,  "\mu_\tau \times \mu_\omega"]
\\
\evhatF  &  {\bfS_{\widehat{\bfW}}^{\widehat{\bfF}}}  \arrow[r,"\widehat{\kappa}"] \arrow[l, swap, "\widehat{\iota}"] &
\ethatF \times \ewhatF
\end{tikzcd}
\myeqn{} \label{extrestriction}\]
The vertical maps $j_{\nu}'$ and $\mu_{\nu}'$ are the restriction maps of $j_\nu$ and $\mu_\nu$ respectively.\\

We make the following observations about the extended restriction diagram.

\begin{lemma}
All the squares in the diagram (\ref{extrestriction}) are commutative. Further, the top two squares are Cartesian.  \end{lemma}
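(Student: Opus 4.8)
The plan is to reduce the lemma to two elementary ingredients: first, that nearly every arrow in (\ref{extrestriction}) is an inclusion of a locally closed subvariety, or passage to a subspace or a quotient, so that the relevant composites agree on the nose; and second, the linear-algebra fact that a block upper triangular endomorphism with square diagonal blocks is invertible exactly when its diagonal blocks are. I would work with the underlying $K$-varieties — replacing each object of (\ref{extrestriction}) by its ``ambient'' version without the $\bfF$-superscript and each arrow by the corresponding morphism of varieties (so the rows become $\ev \xleftarrow{\iota} \bfS_{\bfW} \xrightarrow{\kappa} \et\times\ew$, $\evh \xleftarrow{\iota^{\heart}} \bfS_{\bfW}^{\heart} \xrightarrow{\kappa^{\heart}} \etheart\times\ewh$, $\evhat \xleftarrow{\widehat{\iota}} \bfS_{\widehat{\bfW}} \xrightarrow{\widehat{\kappa}} \ethat\times\ewhat$) — and then recover the $\bfF$-statement at the end, since every arrow is $\bfF$-equivariant and the fixed-point functor $(-)^{\bfF}$ (an equalizer of $\mathrm{id}$ and $\bfF$) preserves commutativity and commutes with fibre products. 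Write $h_k=a^k(e)$, $1\le k\le\phi_1(i_+)$, for the distinguished family of edges from $i_+$ to $i_-$; I record once and for all that on the $\heart$-locus, where each $x_{h_k}$ is an isomorphism and $x$ is $\bfW$-invariant, one has $x_{h_k}(\bfW_{h_k'})=\bfW_{h_k''}$ — because $x_{h_k}|_{\bfW}$ is injective and $\dim\bfW_{h_k'}=\dim\bfW_{h_k''}$, the latter balancedness (equivalently $\dim\bfW_{i_+}=\dim\bfW_{i_-}$ and $\dim\bfT_{i_+}=\dim\bfT_{i_-}$) being exactly the hypothesis under which the $\heart$-versions of the $\bfT$- and $\bfW$-spaces are defined; the same holds with $\bfT$ for $\bfW$.

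For \emph{commutativity} I would dispatch the two left-hand squares first. In the top-left square $\iota,\iota^{\heart},j_\nu,j'_\nu$ are all inclusions and both composites $\bfS_{\bfW}^{\heart}\to\ev$ are the inclusion of $\bfS_{\bfW}^{\heart}=\bfS_{\bfW}\cap\evh$. For the bottom-left square I would first check that $\mu_\nu$ maps $\bfS_{\bfW}^{\heart}$ into $\bfS_{\widehat{\bfW}}$: for $\bfW$-invariant $x$ with the $x_{h_k}$ isomorphisms, (\ref{hatmap}) shows $\widehat{x}$ is $\widehat{\bfW}$-invariant edge by edge — on $h\in\widehat{\Omega}\cap\Omega$ since $\widehat{x}_h=x_h$; on $l_1h_k$ since $\widehat{x}_{l_1h_k}=x_{l_1}x_{h_k}$ is a composite of $\bfW$-preserving maps and $\widehat{\bfW}$ agrees with $\bfW$ at the two surviving endpoints; on $\overline{h_k}l_2$ since $\widehat{x}_{\overline{h_k}l_2}=x_{h_k}^{-1}x_{l_2}$ preserves $\bfW$, using $x_{h_k}^{-1}(\bfW_{h_k''})=\bfW_{h_k'}$ from the remark. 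As $\mu'_\nu$ is the restriction of $\mu_\nu$ and $\widehat{\iota},\iota^{\heart}$ are inclusions, $\widehat{\iota}\circ\mu'_\nu=\mu_\nu\circ\iota^{\heart}$. The top-right square commutes by the same bookkeeping as the top-left one, both composites sending $x$ to $(x^{\bfT},x^{\bfW})$. The only square needing a genuine (though routine) computation is the bottom-right one, where I would verify
\[
\widehat{x^{\bfW}}=\widehat{x}^{\,\widehat{\bfW}},\qquad \widehat{x^{\bfT}}=\widehat{x}^{\,\widehat{\bfT}}\qquad(x\in\bfS_{\bfW}^{\heart}),
\]
i.e.\ that contraction commutes with restriction to $\bfW$ and with passage to $\bfV/\bfW$. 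Checking this on each type of edge via (\ref{hatmap}): on $\widehat{\Omega}\cap\Omega$ both sides are $x_h|_{\bfW}$ (resp.\ the induced map on $\bfT$); on $l_1h_k$ both sides are $(x_{l_1}|_{\bfW})(x_{h_k}|_{\bfW})$, restriction to a subspace being multiplicative; on $\overline{h_k}l_2$ both sides are $(x_{h_k}|_{\bfW})^{-1}(x_{l_2}|_{\bfW})$, where the point to check, $(x_{h_k}^{-1})|_{\bfW_{h_k''}}=(x_{h_k}|_{\bfW_{h_k'}})^{-1}$, is exactly $x_{h_k}(\bfW_{h_k'})=\bfW_{h_k''}$. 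The computation on $\bfV/\bfW$ is the same.

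For the \emph{Cartesian property of the top two squares}, the point is that the relevant downward maps are open immersions. Each $x_{h_k}$ being a square matrix (as $\dim\bfV_{i_+}=\dim\bfV_{i_-}$), the condition ``$x_{h_k}$ invertible'' is the non-vanishing of $\det x_{h_k}$, so $\evh$ is open in $\ev$, and likewise $\etheart$ in $\et$ and $\ewh$ in $\ew$; hence $j_\nu$ and $j_\tau\times j_\omega$ are open immersions. In the top-left square $\iota^{-1}(\evh)=\bfS_{\bfW}\cap\evh=\bfS_{\bfW}^{\heart}$, so the square is the pullback of the open immersion $j_\nu$ along $\iota$, hence Cartesian. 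In the top-right square one needs $\kappa^{-1}(\etheart\times\ewh)=\bfS_{\bfW}^{\heart}$, i.e.\ for $x\in\bfS_{\bfW}$ one has $x\in\evh$ iff $x^{\bfT}\in\etheart$ and $x^{\bfW}\in\ewh$; this is the block-triangular invertibility fact, applied for each $k$ to the matrix of $x_{h_k}$ in bases of $V_{h_k'},V_{h_k''}$ adapted to $\bfW$ (diagonal blocks $x^{\bfW}_{h_k}$ and $x^{\bfT}_{h_k}$, square by the balancedness above). So the top-right square is the pullback of $j_\tau\times j_\omega$ along $\kappa$, hence Cartesian. Finally, applying $(-)^{\bfF}$ to this diagram of $\bfF$-equivariant morphisms — commutative with its two top squares Cartesian — yields (\ref{extrestriction}) with the asserted properties.

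I do not anticipate a serious obstacle; the argument runs parallel to the corresponding statement in \cite{Li23}. The one genuinely delicate point — and the place where the hypotheses (\ref{graphassumption}) and the definition of the $\heart$-spaces intervene — is the dimension bookkeeping at $\{i_+,i_-\}$: it is the balancedness $\dim\bfW_{i_+}=\dim\bfW_{i_-}$ (equivalently $\dim\bfT_{i_+}=\dim\bfT_{i_-}$) that upgrades $x_{h_k}|_{\bfW}$ and $x^{\bfT}_{h_k}$ from injections to isomorphisms on the $\heart$-locus, which is what both the commutativity identities involving $x_{h_k}^{-1}$ and the block-triangular invertibility criterion rely on. Passing from Li's single edge $e$ to the family $\{h_k\}$ merely enlarges the bookkeeping, done uniformly in $k$; the loops created in $\widehat{\Omega}$ do not intervene here, having been absorbed into the construction of $\mu_\nu$ and $\widehat{\Omega}$, while the present statement concerns only the formal compatibility of the resulting diagram.
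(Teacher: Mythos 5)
Your proposal is correct and takes essentially the same route as the paper's proof: commutativity is read off from the definitions (with the compatibility of the contraction map with restriction to $\bfW$ and passage to $\bfV/\bfW$ for the bottom-right square), and the two top squares are shown Cartesian by identifying the fibred product with $ {\bfS_{\bfW}^{\heart,\bfF}}$, the top-right one via the same key fact that $x_{h_k}$ is an isomorphism iff $x_{h_k}^{\bfT}$ and $x_{h_k}^{\bfW}$ are. Your extra packaging — working over $K$ and then taking $\bfF$-fixed points, and making the open-immersion and dimension-balancedness points explicit — only spells out details the paper leaves implicit.
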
 
\begin{proof}
 It is clear that the top two squares and the bottom left square are commutative, since we are dealing with restrictions and inclusions. For the bottom right square, since restriction to $\bfW$ and contraction map are commutative, we have the commutativity.   
 
Let us prove that the top left square is Cartesian. Let $C$ be the fibered product of $\bfS_\bfW^\bfF$ and $\evhF$. Consider the following diagram where $\pi_1$ and $\pi_2$ are the projections onto the first and second components respectively. \\
\[\begin{tikzcd}
\evF & & \bfS_\bfW^\bfF \arrow[ll, "\iota", swap] \\
& C' \arrow[ru, "\pi_1"] \arrow[dl, "\pi_2", swap] \arrow[dr, bend left]
&  \\
\evhF \arrow[uu, "j_\nu"] &  & \bfS_\bfW^{\heart, \bfF} \arrow[ll, "\iota^{\heart}"] \arrow[uu, "j_\nu'"] \arrow[ul, dotted, bend left]
\myeqn{}
\end{tikzcd} \]
Since $C'$ is the fibered product, we have $\iota\pi_1(x, \overline{x}) = j_\nu\pi_2(x, \overline{x})$ where $x \in \bfS_\bfW^\bfF$ and $\overline{x} \in \evhF$. This implies $x = \overline{x}$ and hence $x \in \bfS_\bfW^\bfF \cap \evhF = \bfS_\bfW^{\heart, \bfF}$. This makes the top left Cartesian.

Next we prove that the top right square is Cartesian. Again, we let $C'' = \bfS_\bfW^\bfF \times (\ethF \times \ewhF)$, a fibered product. Similarly to the above, consider the following diagram:\\
\[\begin{tikzcd}
\bfS_\bfW^\bfF \arrow[rr, "\kappa"] & & \etF \times \ewF \\
& C'' \arrow[lu, "\pi_1"] \arrow[dr, "\pi_2", swap] \arrow[dl, bend left]
&  \\
\bfS_\bfW^{\heart, \bfF} \arrow[ur, dotted, bend left] \arrow[uu, "j_\nu'"] \arrow[rr, "\kappa^{\heart}"] &  & \ethF \times \ewhF  \arrow[uu, "j_\tau \times j_\omega", swap] 
\myeqn{}
\end{tikzcd} \]
Let $x \in \bfS_\bfW^\bfF$ and $(y^\bfT, y^\bfW) \in \ethF \times \ewhF$. Then we have, $\kappa(x) = j_\tau \times j_\omega(y^\bfT, y^\bfW)$. Hence, $(x^\bfT, x^\bfW) = (y^\bfT, y^\bfW)$, i.e., $x^\bfT \in \ethF,~  x^\bfW \in \ewhF$. Now using that fact that $x_h$ is an isomorphism iff $x_h^{\bfT}$ and $x_h^\bfW$ are isomorphisms, we get $x \in \bfS_\bfW^{\heart, \bfF}$. The lemma is proved. 
\end{proof}

\begin{lemma}
\[\begin{tikzcd}
{\bfS_{\bfW}^{\heart,\bfF}} \arrow[dr, dotted, "\tilde{\kappa}"] \arrow[dd, "\mu_\nu'", swap] \arrow[rr, "\kappa^{\heart}"] & & \ethF \times \ewhF \arrow[dd, "\mu_\tau \times \mu_\omega"]  \\
& D \arrow[ld, "\pi_1"] \arrow[ur, "\pi_2", swap] 
&  \\
{\bfS_{\widehat{\bfW}}^{\widehat{\bfF}}}  \arrow[rr, "\widehat{\kappa}"] &  & \ethatF \times \ewhatF  
\myeqn{}
\end{tikzcd} \]
Consider the bottom right square of the extended restriction diagram. Let $D$ be the fibered product of ${\bfS_{\widehat{\bfW}}^{\widehat{\bfF}}} $ and $\ethF \times \ewhF$. Then the unique map $\widetilde{\kappa}: {\bfS_{\bfW}^{\heart,\bfF}} \longrightarrow D$ is a vector bundle whose fiber is isomorphic to $(\mathbb{F}_{q^{\phi_(i_+)}})^{\tau_{i_+}.\omega_{i_-} }$.
\end{lemma}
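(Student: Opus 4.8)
The plan is to make all the spaces explicit and read the fibre of $\widetilde{\kappa}$ off directly. For $x \in \bfS_{\bfW}$ and an edge $h$, the map $x_h$ is block lower triangular with respect to $\bfV_{h'} = \bfT_{h'} \oplus \bfW_{h'}$ and $\bfV_{h''} = \bfT_{h''} \oplus \bfW_{h''}$, say $x_h = \left(\begin{smallmatrix} x_h^{\bfT} & 0 \\ \varphi_h & x_h^{\bfW} \end{smallmatrix}\right)$ with $\varphi_h \in \text{Hom}(\bfT_{h'}, \bfW_{h''})$. This gives an identification $\bfS_{\bfW} \cong \et \times \ew \times \bigoplus_{h \in \Omega} \text{Hom}(\bfT_{h'}, \bfW_{h''})$ under which $\kappa$ is the projection to the first two factors, and — since a block triangular map is invertible iff its diagonal blocks are — $\bfS_{\bfW}^{\heart}$ corresponds to $\etheart \times \ewh \times \bigoplus_{h} \text{Hom}(\bfT_{h'}, \bfW_{h''})$; the same holds after taking $\bfF$-fixed points, and since $\bfF$-fixed points commute with fibre products one may run the whole argument over $K$ and pass to $\bfF$-fixed loci at the end. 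By the previous lemma the bottom-right square of (\ref{extrestriction}) commutes, so $\widetilde{\kappa}$ is the unique map with $\pi_1 \widetilde{\kappa} = \mu_{\nu}'$ and $\pi_2 \widetilde{\kappa} = \kappa^{\heart}$, i.e. $\widetilde{\kappa}(x) = (\mu_{\nu}'(x), (x^{\bfT}, x^{\bfW}))$.

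I would then fix a point $d = (\overline{y}, (y^{\bfT}, y^{\bfW})) \in D$ and compute $\widetilde{\kappa}^{-1}(d)$. An $x$ in this fibre has its $\bfT$- and $\bfW$-parts prescribed, $x^{\bfT} = y^{\bfT}$ and $x^{\bfW} = y^{\bfW}$, so it is given by the remaining tuple $(\varphi_h)_{h \in \Omega}$; writing the requirement $\mu_{\nu}'(x) = \overline{y}$ out through the defining formulas (\ref{hatmap}) and a block-matrix computation turns it into the system
\[
\begin{aligned}
\varphi_h &= \overline{\varphi}_h \quad (h \in \widehat{\Omega} \cap \Omega), \\
\overline{\varphi}_{l_1 h_k} &= \varphi_{l_1} x_{h_k}^{\bfT} + x_{l_1}^{\bfW} \varphi_{h_k}, \\
\overline{\varphi}_{\overline{h_k} l_2} &= -(x_{h_k}^{\bfW})^{-1} \varphi_{h_k} (x_{h_k}^{\bfT})^{-1} x_{l_2}^{\bfT} + (x_{h_k}^{\bfW})^{-1} \varphi_{l_2},
\end{aligned}
\]
where $\overline{\varphi}_{\bullet}$ denotes the off-diagonal block of the corresponding component of $\overline{y}$, and $x_{h_k}^{\bfT}, x_{h_k}^{\bfW}$ — read off from $y^{\bfT} \in \etheart$, $y^{\bfW} \in \ewh$ — are invertible. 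Invertibility of $x_{h_k}^{\bfT}$ and $x_{h_k}^{\bfW}$ lets each composite-edge equation be solved uniquely for $\varphi_{l_1}$, respectively $\varphi_{l_2}$, once the $\varphi_{h_k}$ are chosen; and by the graph hypotheses (\ref{graphassumption}) every edge of $\Omega$ other than $h_1, \dots, h_{\phi_1(i_+)}$ occurs exactly once among the left-hand sides (edges disjoint from $[\bfim]$ via the first equation; the remaining edges incident to $[\bfim]$, which have their other endpoint outside $[\bfim]$ by (\ref{graphassumption}), via one composite-edge equation apiece), while $\varphi_{h_1}, \dots, \varphi_{h_{\phi_1(i_+)}}$ stay free. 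Hence, over $K$, the zero choice $\varphi_{h_k} = 0$ gives a section, and the solving formulas trivialise $\widetilde{\kappa}$ as a vector bundle over $D$ with fibre $\bigoplus_{k=1}^{\phi_1(i_+)} \text{Hom}(\bfT_{h_k'}, \bfW_{h_k''})$; in particular $\widetilde{\kappa}$ is surjective.

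It remains to pass to $\bfF$-fixed points. Since $\bfF = \text{Fr} \circ a$ and $\widehat{a}(h_k) = h_{k+1}$, the map $\bfF$ cycles $h_1, \dots, h_{\phi_1(i_+)}$ in a single orbit and acts on entries by the Frobenius; as the solving formulas are $\bfF$-equivariant, an $x$ in the fibre is $\bfF$-fixed iff the tuple $(\varphi_{h_k})_k$ is. So for $d \in D$ the fibre of $\widetilde{\kappa}$ is the $\bfF$-fixed locus of $\bigoplus_{k=1}^{\phi_1(i_+)} \text{Hom}(\bfT_{h_k'}, \bfW_{h_k''})$; the entries recombine into a single matrix over the degree-$\phi_1(i_+)$ extension, and $\dim_K \bfT_{h_k'} = \tau_{i_+}$, $\dim_K \bfW_{h_k''} = \omega_{i_-}$ because $h_k' \in i_+$ and $h_k'' \in i_-$, so this locus is $\text{Hom}_{\mathbb{F}_{q^{\phi_1(i_+)}}}\!\bigl(\mathbb{F}_{q^{\phi_1(i_+)}}^{\,\tau_{i_+}}, \mathbb{F}_{q^{\phi_1(i_+)}}^{\,\omega_{i_-}}\bigr) \cong (\mathbb{F}_{q^{\phi_1(i_+)}})^{\tau_{i_+} \omega_{i_-}}$, as asserted.

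The hard part is the combinatorial bookkeeping inside the second step: verifying from (\ref{graphassumption}) — especially the matching-type condition on edges between $[\bfip]$ and $[\bfim]$ and the vanishing $\phi_2(\bfip) = \phi_2(\bfim) = 0$ — that the equations from $\widehat{\Omega} \cap \Omega$ together with the two families of composite-edge equations determine each $\varphi_h$ with $h \neq h_k$ exactly once and without contradiction; conceptually, that the only coordinates $\mu_{\nu}'$ discards beyond those already discarded by $\mu_{\tau}$ and $\mu_{\omega}$ are precisely the connecting maps $\varphi_{h_1}, \dots, \varphi_{h_{\phi_1(i_+)}}$ along the contracted edges.
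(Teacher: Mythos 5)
Your argument is correct and is essentially the paper's own proof: the same block lower-triangular description of $\bfS_{\bfW}^{\heart,\bfF}$, the same two families of composite-edge equations coming from (\ref{hatmap}), solved for $\varphi_{l_1}$ and $\varphi_{l_2}$ in terms of the connecting maps along the contracted edges, with the free parameter giving the fiber. The only difference is bookkeeping: you keep all $\phi_1(i_+)$ maps $\varphi_{h_k}$ free over $K$ and then carry out the Frobenius descent (and the check that each remaining $\varphi_h$ is determined exactly once via (\ref{graphassumption})) explicitly, whereas the paper passes directly to a single matrix with entries in $\mathbb{F}_{q^{\phi_1(i_+)}}$.
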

\begin{proof}
 The proof is similar to the proof in \cite{Li23} (Lemma 2.2.2), however, the proof is provided for the reader's convenience and completeness. Let $(\widehat{x}, x^T, x^W) \in D ~(= {\bfS_{\widehat{\bfW}}^{\widehat{\bfF}}} \times (\ethF \times \ewhF) )$.  By definition of the fibered product, we have $\widehat{\kappa} \circ \pi_1 (\widehat{x}, x^T, x^W) = \mu_\tau \times \mu_\omega \circ \pi_2 (\widehat{x}, x^T, x^W) \implies \widehat{\kappa}(\widehat{x}) = \mu_\tau \times \mu_\omega(x^T, x^W)$. In order to determine the fiber of such a triple, we need to determine the $(x_h)_{h \in \Omega}$-components of a $x \in {\bfS_{\bfW}^{\heart,\bfF}}$ such that $\widetilde{\kappa}(x)$ is equal to that triple. From (\ref{hatmap}), $\widehat{x}_h = x_h$ if $h \in \widehat{\Omega} \cap \Omega$, results in the determination of all components of the form $x_h$ where $h \in \widehat{\Omega} \cap \Omega$. We are left to determine the components that are of the form $(x_{h_k}, x_{l_1}, x_{l_2})$ where $h_k'' = l_1'$ and $h_k'' = l_2''$. Since, $\bfV_h = \bfT_h \oplus \bfW_h$ for every edge $h$, we can see that,\\
\[ x_{h_k} = \left[ \begin{array}{cc} 
x_{h_k}^T &  0\\
x_{h_k}^{T \rightarrow W} & x_{h_k}^W 
\end{array} \right] ~~ 
x_{l_1} = \left[ \begin{array}{cc} 
x_{l_1}^T &  0\\
x_{l_1}^{T \rightarrow W} & x_{l_1}^W 
\end{array} \right]  ~~ 
x_{l_2} = \left[ \begin{array}{cc} 
x_{l_2}^T &  0\\
x_{l_2}^{T \rightarrow W} & x_{h_k}^W 
\end{array} \right]\]
We notice that the only thing here left to be determined is $x_{h_k}^{T \rightarrow W}, x_{l_1}^{T \rightarrow W}$ and $x_{l_2}^{T \rightarrow W}$. Using the second and the third condition in (\ref{hatmap}) yields, 
\[ \widehat{x}_{l_1h_k}  = x_{l_1}x_{h_k} \implies \left[ \begin{array}{cc} 
\widehat{x}_{l_1h_k}^T &  0\\
\widehat{x}_{l_1h_k}^{T \rightarrow W} & \widehat{x}_{l_1h_k}^W 
\end{array} \right] = \left[ \begin{array}{cc} 
x_{l_1}^T &  0\\
x_{l_1}^{T \rightarrow W} & x_{l_1}^W 
\end{array} \right] ~ \left[ \begin{array}{cc} 
x_{h_k}^T &  0\\
x_{h_k}^{T \rightarrow W} & x_{h_k}^W 
\end{array} \right]
\] 

\[\widehat{x}_{\overline{h_k}{l_2}} = x_{h_k}^{-1}x_{l_2}  \implies \widehat{x}_{l_1h_k}  = x_{l_1}x_{h_k} \implies \left[ \begin{array}{cc} 
\widehat{x}_{h_kl_2}^T &  0\\
\widehat{x}_{h_kl_2}^{T \rightarrow W} & \widehat{x}_{h_kl_2}^W 
\end{array} \right] = \left[ \begin{array}{cc} 
x_{h_k}^T &  0\\
x_{h_k}^{T \rightarrow W} & x_{h_k}^W 
\end{array} \right]^{-1} ~  \left[ \begin{array}{cc} 
x_{l_2}^T &  0\\
x_{l_2}^{T \rightarrow W} & x_{l_2}^W 
\end{array} \right]\]

These lead to the following equations:
\[ \widehat{x}_{l_1h_k}^{T \rightarrow W} = x_{l_1}^{T \rightarrow W}x_{h_k}^T + x_{l_1}^W x_{h_k}^{T \rightarrow W} \text{~~if~}  
h_k'' = l_1' \]
\[ \widehat{x}_{h_kl_2}^{T \rightarrow W} = x_{l_2}^T(-(x_{h_k}^W)^{-1}x_{h_k}^{T \rightarrow W} (x_{h_k}^{T})^{-1} ) + x_{l_2}^{T \rightarrow W}(x_{h_k}^W)^{-1} \text{~~if~}  
h_k'' = l_2''  \]
Finally, \[ x_{l_1}^{T \rightarrow W}  = (\widehat{x}_{l_1h_k}^{T \rightarrow W} - x_{l_1}^W x_{h_k}^{T \rightarrow W}) (x_{h_k}^T)^{-1} \text{~~if~}  
h_k'' = l_1' \]
\[ x_{l_2}^{T \rightarrow W} =  [ \widehat{x}_{h_kl_2}^{T \rightarrow W} + x_{l_2}^T((x_{h_k}^W)^{-1}x_{h_k}^{T \rightarrow W} (x_{h_k}^{T})^{-1} )] (x_{h_k}^W) \text{~~if~}  
h_k'' = l_2''  \]

This shows that $x_{l_1}^{T \rightarrow W}$ and $x_{l_2}^{T \rightarrow W}$ are dependent on $x_{h_k}^{T \rightarrow W}$. Hence, the fiber is completely determined by the element $x_{h_k}^{T \rightarrow W}$. Notice that $x_{h_k}^{T \rightarrow W}: \bfT_{h_k'} \longrightarrow \bfW_{h_k''}$ implying it is a $\omega_{i_{-}} \times \tau_{i_+}$ matrix, whose entries are in $(\mathbb{F}_{q^{\phi_(i_+)}})$. Therefore, the fiber is isomorphic to $(\mathbb{F}_{q^{\phi_(i_+)}})^{\tau_{i_+}.\omega_{i_-} }$.
\end{proof}

Similar to the extension of the restriction diagram, we can extend the induction diagram as shown in \cite{Li23} and note down the observations in the lemma below the diagram.
\[
\begin{tikzcd}[row sep=huge, column sep = huge]
\etF \times \ewF  &       \bfE'^{\bfF}   \arrow[r,"p_2"] \arrow[l, swap, "p_1"] &
\bfE''^{\bfF} \arrow[r, "p_3"] & \evF
\\
\ethF \times \ewhF \arrow[u,"j_\tau \times j_\omega"] \arrow[d, swap, "\mu_\tau \times \mu_\omega"] &   \bfE'^{\heart, \bfF} \arrow[u,"j_\nu'"] \arrow[d, swap, "\mu_\nu'"] \arrow[r,"p_2^{\heart}"] \arrow[l, swap, "p_1^{\heart}"] &  \bfE''^{\heart\bfF} \arrow[u, swap, "j_\nu''"] \arrow[d,  "\mu_\nu''"] \arrow[r, "p_3^{\heart}"] & \evhF \arrow[u,"j_\nu", swap] \arrow[d, "\mu_\nu"]
\\
\ethatF \times \ewhatF  &       \widehat{\bfE}'^{\widehat{\bfF}}   \arrow[r,"\widehat{p_2}"] \arrow[l, swap, "\widehat{p_1}"] &
\widehat{\bfE}''^{\widehat{\bfF}} \arrow[r, "\widehat{p_3}"] & \evhatF
\end{tikzcd}
\myeqn{} \label{extinduction} \]

Here, the first row is (\ref{induction}), the second row is just the restriction of the maps on the first row and the third row represents the edge contracted versions of the first row. Also, the maps $j_{\nu}'$ and $j_\nu''$ are inclusion maps and $\mu_\nu'$ and $\mu_\nu''$ are the maps defined by applying $\mu_\nu$ and the natural projection~~($~\widehat{~}~$) to its components.   

\begin{lemma}
All the squares in the extended induction diagram (\ref{extinduction}) are commutative. Further, all the top squares and the bottom right square is Cartesian. 
\end{lemma}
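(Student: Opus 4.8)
The plan is to argue just as for the extended restriction diagram (\ref{extrestriction}), following \cite{Li23}: establish commutativity of all six squares first, and then the Cartesian property of the three top squares and of the bottom right one. Recall that $\bfE'^{\heart,\bfF}$ and $\bfE''^{\heart,\bfF}$ are the subsets of $\bfE'^{\bfF}$ and $\bfE''^{\bfF}$ given by the classes of pairs $(g,x)$ with $x \in \bfS_\bfW^{\heart,\bfF} = \bfS_\bfW^{\bfF} \cap \evhF$, that the second-row maps are the restrictions of those in (\ref{induction}) and the third-row maps their edge-contracted versions, and that every space and map here is compatible with the twisted Frobenius; hence it is enough to argue over $K$-points and then pass to $\bfF$-fixed points, using that forming pullbacks commutes with forming $\bfF$-fixed points.

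\emph{Commutativity.} The three top squares involve only the inclusions $j_\nu, j_\nu', j_\nu'', j_\tau \times j_\omega$ together with restrictions of $p_1, p_2, p_3$, so they commute tautologically. For the bottom row, the square containing $p_3$ commutes because the relation $\mu_\nu(g.x) = \widehat{g}.\mu_\nu(x)$ makes both $\widehat{p_3}\circ\mu_\nu''$ and $\mu_\nu\circ p_3^\heart$ send $[g,x]_Q$ to $\widehat{g}.\widehat{x}$; the square containing $p_2$ commutes because both composites send $[g,x]_U$ to $[\widehat{g},\widehat{x}]_{\widehat{Q}}$; and the square containing $p_1$ commutes because, exactly as in the restriction-diagram lemma, restriction to $\bfW$ and passage to $\bfV/\bfW$ commute with the contraction maps, i.e. $\widehat{x^\bfW} = (\widehat{x})^{\widehat{\bfW}}$ and $\widehat{x^\bfT} = (\widehat{x})^{\widehat{\bfT}}$.

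\emph{The top squares are Cartesian.} For the square containing $p_1$ this is the identity $\bfS_\bfW^{\heart,\bfF} = \{\, x \in \bfS_\bfW^{\bfF} : x^\bfT \in \ethF,\ x^\bfW \in \ewhF \,\}$, i.e. precisely the elementary fact, used already in the restriction setting, that $x_{h_k}$ is an isomorphism if and only if $x_{h_k}^{\bfT}$ and $x_{h_k}^{\bfW}$ both are. For the square containing $p_2$ it is the equality $\bfE'^{\heart,\bfF} = p_2^{-1}(\bfE''^{\heart,\bfF})$, immediate because $p_2$ only replaces a $U$-orbit by the containing $Q$-orbit, without altering $(g,x)$. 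For the square containing $p_3$ it is the equality $\bfE''^{\heart,\bfF} = p_3^{-1}(\evhF) = \{\, [g,x]_Q : g.x \in \evhF \,\}$; since conjugation by an element of $\gv$ sends isomorphisms to isomorphisms, $g.x$ lies in $\evhF$ exactly when $x$ does, and membership of $x$ in $\bfS_\bfW$ is automatic for classes in $\bfE''^{\bfF}$, so the equality holds.

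\emph{The bottom right square is Cartesian.} Put $D = \widehat{\bfE}''^{\widehat{\bfF}} \times_{\evhatF} \evhF$; we must show the induced map $\bfE''^{\heart,\bfF} \to D$ is an isomorphism. It is convenient to regard a point of $\bfE''^{\heart,\bfF}$ as a pair $(y, \bfW')$ with $y \in \evhF$ and $\bfW' \subseteq \bfV$ a $y$-stable graded subspace of dimension $\omega$ — via $[g,x]_Q \mapsto (g.x,\, g\bfW)$ — and likewise a point of $\widehat{\bfE}''^{\widehat{\bfF}}$ as a pair $(\widehat{y}, \widehat{\bfW}')$; an element of $D$ then amounts to a triple $(y, \widehat{y}, \widehat{\bfW}')$ with $\widehat{y} = \mu_\nu(y)$. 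Given such a triple the subspace $\bfW'$ is forced: its graded pieces indexed by $\bfI - [\bfim]$ coincide with those of $\widehat{\bfW}'$, while for $\bfj \in [\bfim]$ one has $\bfj = h_k''$ for a unique $k$, and, because $y_{h_k}$ is an isomorphism and the $i_+$- and $i_-$-components of $\dim y$ agree, $\bfW'_{\bfj}$ must equal $y_{h_k}(\bfW'_{h_k'})$ with $\bfW'_{h_k'}$ already determined; a short check shows this recipe yields a genuine $y$-stable subspace of the correct dimension, producing a two-sided inverse that is a morphism in both directions. This is the induction-diagram counterpart of the identification of $\widetilde{\mu}_\nu$ (and of $\widetilde{\mu}_\nu''$) with a quotient map in Section 2.1, and I expect this reconstruction step to be the main obstacle; once it is in place, the rest — commutativity and the Cartesian property of the three top squares — is a formal diagram chase parallel to \cite{Li23}.
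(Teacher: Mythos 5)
Your proposal is correct and takes essentially the same route as the paper: commutativity by inspection, the top squares Cartesian via the $Q$- (hence $\gv$-) invariance of the $\heart$ condition (the paper spells this out with an explicit $r \in Q$ for the middle square), and the bottom right square via identifying $\bfE''$-type spaces with pairs (element, stable graded subspace) and reconstructing the $[\bfim]$-graded pieces of the subspace through the invertible maps along the contracted edges. The only cosmetic difference is that you transport the subspace along $y_{h_k}$ itself, while the paper phrases the reconstruction through an auxiliary edge $l_2$ with $l_2'' = h_k''$; both reduce to the same stability and dimension check.
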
 
\begin{proof}
Commutativity follows from the definition. The proof of top left square being Cartesian follows closely to the proof of the top right square of the extended restriction diagram (2.2.e) being Cartesian.

To prove that the top middle square is Cartesian, we let $C = \bfE'^{\bfF} \times \bfE''^{\heart\bfF}$, a fibered product. Consider the following diagram:
\[\begin{tikzcd}
\bfE'^{\bfF} \arrow[rr, "p_2"] & & \bfE''^{\bfF} \\
& C \arrow[lu, "\pi_1"] \arrow[dr, "\pi_2", swap] \arrow[dl, bend left]
&  \\
\bfE'^{\heart\bfF} \arrow[ur, dotted, bend left] \arrow[uu, "j_\nu'"] \arrow[rr, "p_2^{\heart}"] &  & \bfE''^{\heart\bfF}  \arrow[uu, "j_\nu''", swap] 
\myeqn{}
\end{tikzcd} \]
By the definition of a fibered product, we have $p_2([g, x]_U) = j_\nu''([\overline{g}, \overline{x}]_Q)$ where $[g, x]_U \in \bfE'^{\bfF}$ and $[\overline{g}, \overline{x}]_Q \in \bfE''^{\heart\bfF}$. This implies, $[g, x]_Q = [\overline{g}, \overline{x}]_Q$. Hence, there exists a $r \in Q$ such that $(g, x) =  r. (\overline{g}, \overline{x}) = (\overline{g} r^{-1}, r. \overline{x})$. Therefore,  $x = r.\overline{x}$, implying $x \in \bfS_\bfW^{\heart, \bfF}$ and we finally have $[g, x]_U \in \bfE'^{\heart, \bfF}$. In a similar fashion, the top right square can be proved Cartesian. 

Next, we proceed to prove that the bottom right square is Cartesian. To prove this, we will use the following:
$\bfE''^{\bfF} \cong \{ (x, \bfU) ~|~ x \in \evF, \bfV/\bfU \cong \bfT, \bfU \cong \bfW\}$ which leads to 
$\bfE'^{\bfF} \cong \{ (x, \bfU, g^{\prime}, g'' ) ~|~ (x, \bfU) \in \bfE''^{\bfF},~g': \bfV/\bfU \cong \bfT, g'':\bfU \cong \bfW\}$. Similarly, we have $\widehat{\bfE}''^{\widehat{\bfF}} \cong \{ (\widehat{x}, \widehat{\bfU}) ~|~ \widehat{x} \in \evhatF, \widehat{\bfV}/\widehat{\bfU} \cong \widehat{\bfT},\widehat{\bfU} \cong \widehat{\bfW}\}$ and $\bfE''^{\heart, \bfF} \cong \{ (x, \bfU) ~|~ x \in \evhF, \bfV/\bfU \cong \bfT,\bfU \cong \bfW\}$. 
Let $C' = \evhF \times \widehat{\bfE}''^{\widehat{\bfF}}$.
\[\begin{tikzcd}
\bfE''^{\heart, \bfF} \arrow[dr, dotted, bend left] \arrow[dd, "\mu_\nu''", swap] \arrow[rr, "p_3^{\heart}"] & & \evhF \arrow[dd, "\mu_\nu"]  \\
& C' \arrow[ld, "\pi_1"] \arrow[ur, "\pi_2", swap] \arrow[ul, bend left]
&  \\
\widehat{\bfE}''^{\widehat{\bfF}}  \arrow[rr, "\widehat{p_3}"] &  & \evhatF  
\myeqn{}
\end{tikzcd} \]

Define $\bfU_\bfi = \widehat{\bfU}_\bfi, ~ \bfi \neq \bfim$ and $\bfU_{\bfim} = x_{l_2}(\bfU_{l_2'})$ where the edge $l_2$ is from (\ref{hatmap}). Now, $x_{h_k}(\bfU_{h_k'}) = x_{h_k}(x_{h_k}^{-1}x_{l_2}(\widehat{\bfU}_{l_2'})) = (x_{h_k}x_{h_k}^{-1})(x_{l_2}(\widehat{\bfU}_{l_2'})) \subseteq x_{l_2}(\widehat{\bfU}_{l_2'}) = x_{l_2}(\bfU_{l_2'}) = \bfU_\bfim$. Therefore, we get a map from $C' \to \bfE''^{\heart, \bfF}$ defined by $(x, (\widehat{x}, \widehat{\bfU})) \mapsto (x, \bfU)$ and the result follows.\end{proof}

\begin{lemma}
Let $C$ be the fibered product of $\bfE''^{\heart\bfF}$ and $\widehat{\bfE}'^{\widehat{\bfF}}$. Then there exists maps \( p' : \bfE'^{\heart\bfF} \to C \),~ $\widehat{p}: C \to \bfE''^{\heart, \bfF}$ and $\widehat{\mu}: C \to \widehat{\bfE}'^{\widehat{\bfF}}$ such that the middle square in the extended induction diagram factors through $C$ and makes the following diagram commute:
\[
\begin{tikzcd}[row sep=large, column sep = large]
\bfE'^{\heart, \bfF} \arrow[dr, dotted, "{p'}" description] \arrow[dd, swap, "\mu_\nu'"] \arrow[rr,"p_2^{\heart}"] & & \bfE''^{\heart, \bfF} \arrow[dd,  "\mu_\nu''"]  
\\
& C \arrow[ur, "\widehat{p}"] \arrow[dl, "\widehat{\mu}", swap] & \\
\widehat{\bfE}'^{\widehat{\bfF}}   \arrow[rr,"\widehat{p_2}"]  & &
\widehat{\bfE}''^{\widehat{\bfF}}
\myeqn{}
\end{tikzcd}
 \]\\ 
Moreover, the map \( p' \) defines a \({\bf G}_{\bfT}^{[\bfim], \bfF} \times {\bf G}_{\bfW}^{[\bfim], \bfF}\)-bundle.
\end{lemma}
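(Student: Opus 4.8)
The plan is to make the fibered product $C$ and the three maps completely explicit --- so that commutativity of the two triangles (and of the outer square, which is just the middle square of (\ref{extinduction})) becomes a formality --- and then to read off the bundle structure of $p'$ from the framing description of $\bfE'$ used in the proof of the preceding lemma. By construction $C=\bfE''^{\heart\bfF}\times_{\widehat{\bfE}''^{\widehat{\bfF}}}\widehat{\bfE}'^{\widehat{\bfF}}$, the fibered product of $\mu_\nu''\colon\bfE''^{\heart\bfF}\to\widehat{\bfE}''^{\widehat{\bfF}}$ and $\widehat{p_2}\colon\widehat{\bfE}'^{\widehat{\bfF}}\to\widehat{\bfE}''^{\widehat{\bfF}}$, so I take $\widehat p\colon C\to\bfE''^{\heart\bfF}$ and $\widehat\mu\colon C\to\widehat{\bfE}'^{\widehat{\bfF}}$ to be the two projections; then $\mu_\nu''\circ\widehat p=\widehat{p_2}\circ\widehat\mu$ holds tautologically. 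The middle square of (\ref{extinduction}) gives $\mu_\nu''\circ p_2^{\heart}=\widehat{p_2}\circ\mu_\nu'$, so $(p_2^{\heart},\mu_\nu')\colon\bfE'^{\heart\bfF}\to\bfE''^{\heart\bfF}\times\widehat{\bfE}'^{\widehat{\bfF}}$ factors uniquely through $C$; this factorization is $p'$, and then $\widehat p\circ p'=p_2^{\heart}$, $\widehat\mu\circ p'=\mu_\nu'$ are immediate, which is exactly the asserted commutativity.

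For the bundle statement I would use the framing descriptions from the proof of the previous lemma, $\bfE''^{\heart\bfF}\cong\{(x,\bfU)\mid x\in\evhF,\ \bfV/\bfU\cong\bfT,\ \bfU\cong\bfW\}$ and $\bfE'^{\heart\bfF}\cong\{(x,\bfU,g',g'')\mid(x,\bfU)\in\bfE''^{\heart\bfF},\ g'\colon\bfV/\bfU\xrightarrow{\sim}\bfT,\ g''\colon\bfU\xrightarrow{\sim}\bfW\ \text{graded}\}$, together with the analogous descriptions of $\widehat{\bfE}''^{\widehat{\bfF}}$ and $\widehat{\bfE}'^{\widehat{\bfF}}$. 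Since $\widehat{\bfV}_{\bfi}=\bfV_{\bfi}$, $\widehat{\bfT}_{\bfi}=\bfT_{\bfi}$, $\widehat{\bfW}_{\bfi}=\bfW_{\bfi}$ for $\bfi\notin[\bfim]$, and $\bfU_{\bfim}=x_{l_2}(\bfU_{l_2'})$ is reconstructed from $x$ exactly as in that lemma, deleting the $[\bfim]$-graded components of a framing pair $(g',g'')$ yields a framing pair $(\widehat{g}',\widehat{g}'')$ for $(\widehat{\bfV}/\widehat{\bfU},\widehat{\bfU})$, and $\mu_\nu'$ is precisely $(x,\bfU,g',g'')\mapsto(\mu_\nu(x),\widehat{\bfU},\widehat{g}',\widehat{g}'')$. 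Under these identifications $C\cong\{(x,\bfU,\widehat{g}',\widehat{g}'')\mid(x,\bfU)\in\bfE''^{\heart\bfF}\}$ and $p'$ becomes the map forgetting the $[\bfim]$-components of $(g',g'')$; so a fibre of $p'$ is the set of ways of restoring those components, namely a graded isomorphism $\bigoplus_{\bfi\in[\bfim]}(\bfV/\bfU)_{\bfi}\xrightarrow{\sim}\bigoplus_{\bfi\in[\bfim]}\bfT_{\bfi}$ together with a graded isomorphism $\bigoplus_{\bfi\in[\bfim]}\bfU_{\bfi}\xrightarrow{\sim}\bigoplus_{\bfi\in[\bfim]}\bfW_{\bfi}$. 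These fibres form, locally trivially over $C$, a torsor under ${\bf G}_{\bfT}^{[\bfim]}\times{\bf G}_{\bfW}^{[\bfim]}$ (it is just a frame bundle for the $[\bfim]$-graded pieces), and on $\bfF$-fixed points a torsor under ${\bf G}_{\bfT}^{[\bfim], \bfF}\times{\bf G}_{\bfW}^{[\bfim], \bfF}$, the group acting through the $[\bfim]$-part of the $\gtF\times\gwF$-action that makes $p_2^{\heart}$ a bundle; this is the asserted bundle.

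The only genuinely non-formal point is that every fibre of $p'$ is non-empty, so that this torsor structure is an honest principal bundle. This uses the dimension match $\dim(\bfV/\bfU)_{\bfi}=\dim\bfT_{\bfi}$, $\dim\bfU_{\bfi}=\dim\bfW_{\bfi}$ for $\bfi\in[\bfim]$ (from $\bfV=\bfT\oplus\bfW$ and $\bfU\cong\bfW$), and then, because the tuples in $\bfE'^{\heart\bfF}$ are required to be $\bfF$-fixed, Lang's theorem applied to each fibre --- a torsor under the connected group $\prod_{\bfi\in[\bfim]}\text{GL}(\bfT_{\bfi})\times\prod_{\bfi\in[\bfim]}\text{GL}(\bfW_{\bfi})$ carrying a compatible twisted Frobenius --- to conclude it has rational points. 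Beyond this the argument is a transcription of the corresponding result in \cite{Li23}; I do not expect the loops to cause trouble, since the loop-creating edges $l_1h_k$ and $\overline{h_k}\,l_2$ of $\widehat{\Omega}$ involve no vertex of $[\bfim]$ and hence play no role in the framing picture.
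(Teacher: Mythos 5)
Your proposal is correct, and its first half (constructing $\widehat p$, $\widehat\mu$ as the two projections of $C$ and obtaining $p'$ from the universal property, whence both triangles commute) is exactly the paper's argument. For the bundle statement, however, you take a genuinely different route. The paper argues structurally: $p_2^{\heart}$ is a $Q^{\bfF}/U^{\bfF}\,(\cong\gtF\times\gwF)$-bundle, the projection $\widehat p([g,x]_Q,[h,y]_{\widehat U})=[g,x]_Q$ is a $\widehat Q^{\widehat\bfF}/\widehat U^{\widehat\bfF}\,(\cong\gthatF\times\gwhatF)$-bundle, and since $p_2^{\heart}=\widehat p\circ p'$, the map $p'$ inherits the complementary ${\bf G}_{\bfT}^{[\bfim],\bfF}\times{\bf G}_{\bfW}^{[\bfim],\bfF}$-bundle structure. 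You instead work in the explicit flag-plus-framing coordinates $(x,\bfU,g',g'')$ (the same ones the paper introduces in the proof of the preceding lemma), identify $p'$ as the map forgetting the $[\bfim]$-graded components of the framing, observe that each fibre is a torsor under ${\bf G}_{\bfT}^{[\bfim]}\times{\bf G}_{\bfW}^{[\bfim]}$, and invoke Lang's theorem on these connected-group torsors with their twisted Frobenius to guarantee $\bfF$-rational points. Your version is more hands-on and has the merit of explicitly addressing a point the paper's terse "$p'$ inherits the same bundle structure" leaves implicit, namely surjectivity of $p'$ on $\bfF$-fixed points (non-emptiness of fibres), which is exactly what is needed later for the counting identity $p'_!p'^*=\#({\bf G}_{\bfT}^{[\bfim],\bfF}\times{\bf G}_{\bfW}^{[\bfim],\bfF})$; the paper's version is shorter and avoids coordinates by quoting the two ambient bundle structures.
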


\begin{proof}
The first part of the proof follows from the universal property of fibered products. \\
When defining the induction diagram, we noted that \( p_2 \) is a $Q^{\bfF}/U^{\bfF} (\cong \gtF \times \gwF)$-bundle. It follows that \( p_2^{\heartsuit} \) is a $Q^{\bfF}/U^{\bfF} (\cong \gtF \times \gwF)$-bundle. This implies that $\bfE'^{\heartsuit, \bfF}$ has a natural fiber structure compatible with the group actions of \( \gtF \) and \( \gwF \). Note that the map, \( \widehat{p}: C \to \bfE''^{\heart, \bfF}\) defined as $\widehat{p}([g, x]_Q,[h, y]_{\widehat{U}}) = [g, x]_Q$ has fibers in $\widehat{\bfE}'^{\widehat{\bfF}}$, implying it is a  $\widehat{Q}^{\widehat{\bfF}}/\widehat{U}^{\widehat{\bfF}} (\cong \gthatF \times \gwhatF)$-bundle. Therefore, by the properties of fiber bundles, \( p' \) inherits the same bundle structure as a \({\bf G}_{\bfT}^{[\bfim], \bfF} \times {\bf G}_{\bfW}^{[\bfim], \bfF}\)-bundle and the lemma is proved.
\end{proof}

\subsection{Embedding among Hall Algebras}
Fix a square root $q^{1/2}$ of $q$ in the field $\mb C$ of complex numbers and let $\mb Q(q^{1/2})$ be the subfield of $\mb C$
generated by $\mb Q$ and $q^{1/2}$.

Let $\phi: X\to Y$ be a map between two finite sets. If $f: Y\to \mb Q(q^{1/2})$ is a function, we define the pullback of $f$ along $\phi$ by
$\phi^* (f) = f \phi$. If $f' : X\to \mb Q(q^{1/2})$ is a function, we define  
a push-forward function $\phi_!(f') : Y\to \mb Q(q^{1/2})$ by $\phi_!(f')(y) = \sum_{x\in \phi^{-1}(y)} f'(x)$ for all $y\in Y$.

Fix $\bfV \in \mathcal V_{I,\nu}$ where $\nu \in \mb{N}[I]$.
To each $\nu \in \mb{N}[I]$ we define a vector space associated with it as follows:
\begin{equation}
    \hvO = \{f: \evF \to \mb{Q}(q^{1/2}) ~|~ f(g.x) = f(x) ~\forall g \in \gvF, x \in \evF \}.
    \myeqn{}
\end{equation}
i.e., $f$ is a $\gvF$-invariant. Let $\nu, \tau, \omega \in \mb{N}[I]$ such that $\nu = \tau + \omega$. Further, we let $(\bfV, \bfW, \bfT) \in \mathcal{V}_{I, \nu} \times \mathcal{V}_{I, \tau} \times \mathcal{V}_{I, \omega}$. 
Now, we define the Hall algebra $\hO$ by taking $\hvO$ and  summing over all the dimension vectors $\nu$, i.e.,
\begin{equation}
    \hO = \bigoplus_{\nu \in \mb{N}[I]} \hvO.
    \myeqn{}
\end{equation}
and defining multiplication as follows:
\[
\begin{aligned}
    \text{Ind}^{\nu}_{\tau, \omega}&: \htO \otimes \hwO \longrightarrow \hvO; \\
    \text{Ind}^{\nu}_{\tau, \omega} &= (q^{1/2})^{-m_{\Omega}(\tau, \omega)} \ds\frac{1}{\# \gtF \times \gwF}(p_3)_!(p_2)_!(p_1)^*
\end{aligned} \myeqn{} \]
where $m_\Omega(\tau, \omega) = \ds\sum_{\bfi \in \bfI} \tau_{\bfi}\omega_{\bfi} + \ds\sum_{h \in \Omega}\tau_{h'}\omega_{h''} $ and the maps $p_1, p_2$ and $p_3$ are from the induction diagram.

In \cite{Lus98}, Lusztig proved that the above-mentioned Hall algebra is a unital associative algebra over $\mb{Q}(q^{1/2})$. Let $f_1 \in \htO$ and $f_2 \in \hwO$. We then write,
\[
\begin{aligned}
   f_1 \star f_2 &=  \ds\frac{1}{\# \gtF \times \gwF}(p_3)_!(p_2)_!(p_1)^*(f_1 \otimes f_2)   \text{~~~~and} \\
   f_1 \circ f_2 &= (q^{1/2})^{-m_{\Omega}(\tau, \omega)} f_1 \star f_2 =   \text{Ind}^{\nu}_{\tau, \omega}(f_1 \otimes f_2).
\end{aligned}
\myeqn{} \]

Similarly, the restriction diagram helps to define the map dual to $ \text{Ind}^{\nu}_{\tau, \omega}$ as follows:
\[
\begin{aligned}
    \text{Res}^{\nu}_{\tau, \omega}&: \hvO \longrightarrow \htO \otimes \hwO  \\
    \text{Res}^{\nu}_{\tau, \omega} &= (q^{1/2})^{-m^{*}_{\Omega}(\tau, \omega)} \kappa_!\iota^*
\end{aligned} \myeqn{} \]
where $m^*_\Omega(\tau, \omega) = -\ds\sum_{\bfi \in \bfI} \tau_{\bfi}\omega_{\bfi} + \ds\sum_{h \in \Omega}\tau_{h'}\omega_{h''} $ and the maps $\kappa$ and $\iota$ are from the restriction diagram.

Let $r_{\Omega} = \ds\bigoplus_{\nu = \tau + \omega}  \text{Res}^{\nu}_{\tau, \omega}: \hO \to \hO \otimes \hO $. 
In \cite{Lus98}, we see that $(\hO, r_{\Omega})$ forms an associative algebra by defining multiplication in $\hO \otimes \hO$ as $(f_1 \otimes f_2)(g_1 \otimes g_2) = (q^{1/2})^{\nu_2\cdot\mu_1} f_1 \circ g_1 \otimes f_2 \circ g_2$ for all $f_1, f_2, g_1, g_2 \in \hO$ such that $(f_1, f_2) \in {\bf H}_{\nu_1} \times {\bf H}_{\nu_2}$ and $(g_1, g_2) \in {\bf H}_{\mu_1} \times {\bf H}_{\mu_2}$. \\
Having defined $\hvO$, we define the following:
\[
\begin{aligned}
    \hvOh &= \{f: \evhF \to \mb{Q}(q^{1/2}) ~|~ f(g.x) = f(x) ~\forall g \in \gvF, x \in \evhF \},\\
    \hvOhat &= \{f: \evhatF \to \mb{Q}(q^{1/2}) ~|~ f(g.x) = f(x) ~\forall g \in \gvhatF, x \in \evhatF \},
\end{aligned}
\]
where $\nu \in \mathbb{N}[\widehat{I}]$. Just like in \cite{Li23}, this gives 2 new Hall algebras, 
\[
\hOh = \bigoplus_{\nu \in \mb{N}[\widehat{I}]} \hvOh \text{~~and~~} \hOhat = \bigoplus_{\nu \in \mb{N}[\widehat{I}]} \hvOhat,
\]
where multiplication is defined using the second and the third row in the extended induction diagram (\ref{extinduction}) respectively.

The inclusion map $j_\nu: \evhF \to \evF$, induces the map,
$j_{\nu!}: \hvOh \to \hvO$ by extension by zero, i.e., 
$j_{\nu!}(f)(x) = \left\{ \begin{array} {cc} f(x) \ \ \ \ \ \ \ &\text{if} \ \ x \in \evhF\\
0 \ \ \ \ &\text{if} \ \  x \in \evF \backslash \evhF \end{array} \right. $.

Taking sum over $\nu \in \mb{N}[\widehat{I}]$, we get $j_!: \hOh \to \hO$, and it can be shown that it is an algebra homomorphism over $\mb{Q}(q^{1/2})$, by using the fact that the top square in the extended induction diagram is Cartesian.\\
We may also define the pullback of the contraction map $\mu_\nu: \evhF \to \evhatF$ by $\mu_\nu^*: \hvOhat \to \hvOh$, $f \mapsto \mu_\nu \circ f$. Note that this map is an isomorphism of vector spaces. We twist this map and define \[\mu_\nu^\star = (q^{-1/2})^{\nu_{i_-}^2 \phi_1(i_-)}\mu_\nu^*. \]
Again, taking sum over $\nu \in \mb{N}[\widehat{I}]$, we get \[\mu^\star: \hOhat \to \hOh \] 

We will later prove that this is an algebra homomorphism, and in fact an isomorphism. 
For now, assuming that $\mu^{\star}$ is an isomorphism, we define, 
\begin{equation*}
\begin{aligned}
\mu_{\star} = (\mu^{\star})^{-1} = \ds\frac{(q^{-1/2})^{\nu_{i_-}^2 \phi_1(i_-)}}{\# \text{GL}(\bfV_{i_-})} \mu_!.
\end{aligned}
\end{equation*}

The next theorem gives the main result of this paper. The theorem confers a embedding among the Hall algebras $\hOhat$ and $\hO$. The proof of the theorem is similar to the proof of theorem 2.3.1 in \cite{Li23} with some changes accounting for additional edges between the orbits, about which edges are contracted, and formation of loops after the edge contraction. Nonetheless, the entire proof is written here for convenience. 

\begin{theorem}
The composition of maps defined by $\psi = j_!\circ \mu^\star: \hOhat \to \hO$ is an embedding of Hall algebras. \label{main embedding}
\end{theorem}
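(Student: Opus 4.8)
The plan is to prove the theorem in two stages, corresponding to the factorization $\psi = j_! \circ \mu^\star$. Since an embedding (injective algebra homomorphism) is closed under composition, it suffices to show (i) $\mu^\star : \hOhat \to \hOh$ is an algebra homomorphism and, in fact, an isomorphism, and (ii) $j_! : \hOh \to \hO$ is an injective algebra homomorphism. Injectivity of $\psi$ will then follow because $j_!$ is injective (it is extension by zero, hence visibly injective on each graded piece $\hvOh \hookrightarrow \hvO$) and $\mu^\star$ is a vector-space isomorphism (it is a scalar multiple of the pullback $\mu_\nu^*$ along the quotient map $\mu_\nu$, which the discussion after diagram~\eqref{hatmap} identifies with ${\bf q_V}$, so $\mu_\nu^*$ is the obvious identification of $\gv^{[\bfim],\bfF}$-invariant functions — and all functions on $\evhF$ are such — with functions on the quotient $\evhatF$). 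So the whole content is the homomorphism property of each map.

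For step (ii), the argument is the standard one: one must show $j_{\nu!}(f_1 \circ f_2) = j_{\nu!}(f_1) \circ j_{\nu!}(f_2)$ for $f_1 \in \hvOh[\tau]$, $f_2 \in \hvOh[\omega]$. Writing out both sides through the induction diagram, the left side uses the induction maps $p_1^\heartsuit, p_2^\heartsuit, p_3^\heartsuit$ on the $\heartsuit$-row of \eqref{extinduction}, while the right side uses $p_1, p_2, p_3$ on the top row together with $j_\nu', j_\nu'', j_\nu$. The key input is Lemma~2.2.4 (or rather the fact that the top-right square of \eqref{extinduction} is Cartesian, from the lemma following \eqref{extinduction}): base change along a Cartesian square converts $j_!$ composed with pushforward into pushforward composed with $j_!$, i.e. $(\,\cdot\,)_! $ commutes with restriction to the $\heartsuit$-locus because the $\heartsuit$-locus of the total space is exactly the preimage. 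One checks that the normalizing factors $(q^{1/2})^{-m_\Omega(\tau,\omega)}$ match on both sides since they depend only on $\tau,\omega$ and the combinatorics of $\Omega$, which is unchanged when passing to the $\heartsuit$-locus (we are not yet changing graphs here). This is exactly Li's argument and goes through verbatim.

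For step (i), the homomorphism property $\mu_\nu^\star(\widehat f_1 \circ \widehat f_2) = \mu_\nu^\star(\widehat f_1) \circ \mu_\nu^\star(\widehat f_2)$ is the genuinely new computation, and I expect it to be the main obstacle. The strategy is to compare the induction on the $\widehat{\phantom{x}}$-row of \eqref{extinduction} with the induction on the $\heartsuit$-row, using that the middle square factors through the fibered product $C$ (Lemma~2.2.5) with $p'$ a $\gthat^{[\bfim],\bfF}\times\gwhat^{[\bfim],\bfF}$-bundle, and that the bottom-right square is Cartesian. Concretely, one pushes a pair of functions around both routes of the factorized diagram; the fibered-product step contributes a factor equal to the cardinality of the group fiber of $p'$, the Cartesian step contributes nothing, and one is left to reconcile (a) the difference between the Green/Hall normalizations $m_\Omega(\tau,\omega)$ computed in the big graph $\Omega$ versus $m_{\widehat\Omega}(\widehat\tau,\widehat\omega)$ in the contracted graph $\widehat\Omega$, and (b) the twisting scalars $(q^{-1/2})^{\nu_{i_-}^2\phi_1(i_-)}$, $(q^{-1/2})^{\tau_{i_-}^2\phi_1(i_-)}$, $(q^{-1/2})^{\omega_{i_-}^2\phi_1(i_-)}$ attached to $\mu_\nu^\star$ on the three graded pieces. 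The heart of the matter is the bookkeeping identity
\[
m_\Omega(\tau,\omega) - m_{\widehat\Omega}(\widehat\tau,\widehat\omega)
= \bigl(\nu_{i_-}^2 - \tau_{i_-}^2 - \omega_{i_-}^2\bigr)\phi_1(i_-)
 - \bigl(\text{contribution of the fiber of }\widetilde\kappa / p'\bigr),
\]
where the fiber contributions come from Lemma~2.2.4 (fiber $(\mathbb F_{q^{\phi_1(i_+)}})^{\tau_{i_+}\omega_{i_-}}$) and its induction-side analogue in Lemma~2.2.5; one expands $m_\Omega$ by splitting $\Omega$ into the edges surviving into $\widehat\Omega\cap\Omega$, the edges $h_k$ being contracted, and the loops $h$ with $h',h''\in[\bfim]$ (of which there are none, by assumption~\eqref{graphassumption}.3 — this is where $\phi_2(i_\pm)=0$ is used), and tracks how each of the new composite edges $l_1h_k$, $\overline{h_k}l_2$ of $\widehat\Omega$ redistributes the dimension products. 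Once this scalar identity is verified, the homomorphism property drops out, and since $\mu^\star$ is already known to be a linear isomorphism it is an isomorphism of algebras, completing the proof that $\psi$ is an embedding.
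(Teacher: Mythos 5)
Your proposal is correct and follows essentially the same route as the paper: factor $\psi$ as $j_!\circ\mu^\star$, get the homomorphism property of $j_!$ by base change along the Cartesian top squares of the extended induction diagram, and get multiplicative compatibility of $\mu^\star$ from the Cartesian bottom-right square together with the factorization through the fibered product $C$, where $p'$ is a $\bfG_{\bfT}^{[\bfim],\bfF}\times\bfG_{\bfW}^{[\bfim],\bfF}$-bundle, with injectivity coming from $j_!$ being extension by zero and $\mu^\star$ a linear isomorphism. One clarification on your bookkeeping: the group factor $\#(\bfG_{\bfT}^{[\bfim],\bfF}\times\bfG_{\bfW}^{[\bfim],\bfF})$ cancels against the ratio of the normalizations $1/\#(\gthatF\times\gwhatF)$ and $1/\#(\gtF\times\gwF)$ rather than entering the $q^{1/2}$-exponent, the fiber of $\widetilde{\kappa}$ is relevant only to the restriction (coproduct) side and not to the multiplication, and the exponent identity needed is simply $m_{\Omega}(\tau,\omega)-m_{\widehat{\Omega}}(\tau,\omega)=2\tau_{i_-}\omega_{i_-}\phi_1(i_-)=(\nu_{i_-}^2-\tau_{i_-}^2-\omega_{i_-}^2)\phi_1(i_-)$, which is exactly the paper's computation that $N=0$.
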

\begin{proof} Above we have proved that $j_!: \hOh \to \hO$, is an algebra homomorphism. It remains to show that $\mu^{\star}$ is also a homomorphism. To do that, we need to show the multiplicative compatibility of $\mu^{\star}$, i.e., $\mu_\nu^\star \widehat{\text{Ind}}_{\tau, \omega}^{\nu} = \text{Ind}_{\tau, \omega}^{\nu. \heart}(\mu_{\tau}^\star \otimes \mu_\omega^\star)$. From the extended induction diagram (\ref{extinduction}) we have, 
\[ \begin{aligned}
\mu^*_\nu (\widehat p_3)_! (\widehat p_2)_! \widehat p_1^* 
& =  (p^\heart_3)_! \mu''^*_\nu (\widehat p_2)_! \widehat p^*_1\\
& = (p^\heart_3)_! \widehat p_! \widehat \mu^* \widehat p^*_1\\
& = \frac{1}{\# \bfG^{\bfim, \bfF}_{\bfT} \times \bfG^{\bfim,\bfF}_{\bfW} } (p^\heart_3)_! (p^\heart_2)_! \mu'^*_{\nu} \widehat p^*_1\\
&=  \frac{1}{\# \bfG^{\bfim, \bfF}_{\bfT} \times \bfG^{\bfim,\bfF}_{\bfW} } (p^\heart_3)_! (p^\heart_2)_!  (p^\heart_1)^* (\mu^*_\tau \otimes \mu^*_{\omega}) ,
\end{aligned} \myeqn{} \label{embedcalc}
\] 

where the third equality is due to Lemma (2.2.4),
\[
(p^\heart_2)_! \mu'^*_\nu = 
\widehat p_! p'_! p'^* \widehat \mu^* = \# (\bfG^{\bfim, \bfF}_{\bfT} \times \bfG^{\bfim,\bfF}_{\bfW}) \widehat p_! \widehat \mu^*. \]
By (\ref{embedcalc}), we have,
\[
\begin{aligned}
\mu^\star_{\nu} \widehat{\text{Ind}}^{ \nu}_{\tau,\omega}
& = (q^{1/2})^{-\nu^2_{i_-} \phi_1(i_-) - m_{\widehat \Omega}( \tau, \omega)} 
\frac{1}{\# \bfG^{\widehat \bfF}_{\widehat \bfT}\times \bfG^{\widehat \bfF}_{\widehat \bfW} } 
\mu^*_\nu (\widehat p_3)_!(\widehat p_2)_! (\widehat p_1)^*\\
&=(q^{1/2})^{-\nu^2_{i_-} \phi_1(i_-) - m_{\widehat \Omega}(\tau,\omega)} 
\frac{1}{\# \bfG^{\bfF}_\bfT\times \bfG^{\bfF}_\bfW} (p^\heart_3)_! (p^\heart_2)_!  (p^\heart_1)^* (\mu^*_\tau \otimes \mu^*_{\omega}) \\
& = (q^{1/2})^{-N} \text{Ind}^{\nu,\heartsuit}_{\tau,\omega} (\mu^\star_{\tau} \otimes \mu^{\star}_{\omega}),
\end{aligned} 
\]

where 
\[
\begin{aligned}
N & = (\nu^2_{i_-} -\tau^2_{i_-} - \omega^2_{i_-}) \phi_1(i_-)  + m_{\widehat \Omega}(\tau,\omega) - m_\Omega(\tau,\omega)\\
&=(\nu^2_{i_-} -\tau^2_{i_-} - \omega^2_{i_-}) \phi_1(i_-)  - 2 \tau_{i_-}\omega_{i_-} (\phi_1(i_-)) \\
&=0.
\end{aligned} \]
The theorem is proved.
\end{proof}

The calculation $m_{\widehat \Omega}(\tau,\omega) - m_\Omega(\tau,\omega) = -2\tau_{i_-}\omega_{i_-} (\phi_1(i_-))$ can be illustrated as follows:

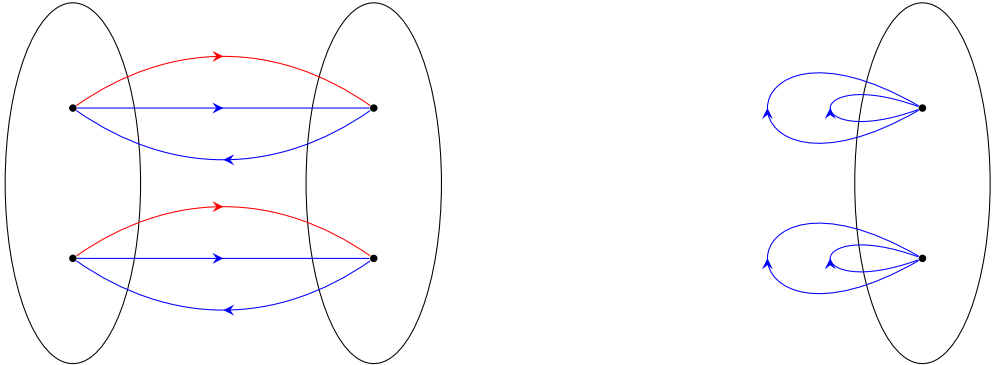
\begin{figure}[ht]
\begin{minipage}{0.5\linewidth} 
  \centering
  \begin{tikzpicture}
  \node[ellipse, minimum width=1.8cm, minimum height=4.8cm, draw] (ellipse1) at (0,0) {};
  \node[circle, fill, inner sep=1pt] (dot11) at (0, 1) {};
  \node[circle, fill, inner sep=1pt] (dot10) at (0, -1) {};
  
  \node[ellipse, minimum width=1.8cm, minimum height=4.8cm, draw] (ellipse2) at (4,0) {};
  \node[circle, fill, inner sep=1pt] (dot21) at (4, 1) {};
  \node[circle, fill, inner sep=1pt] (dot20) at (4, -1) {};

  \draw[blue, -,postaction={decorate,decoration={markings,mark=at position 0.5 with {\arrow[thick]{stealth};}}}] (dot11) -- (dot21);
  \draw[blue, -,postaction={decorate,decoration={markings,mark=at position 0.5 with {\arrow[thick]{stealth};}}}] (dot10) -- (dot20);
  \draw[red, -,postaction={decorate,decoration={markings,mark=at position 0.5 with {\arrow[thick]{stealth};}}}] (dot11) to[out=35, in=145] (dot21);
  \draw[blue, -,postaction={decorate,decoration={markings,mark=at position 0.5 with {\arrow[thick]{stealth};}}}] (dot21) to[out=215, in=-35] (dot11);
  
  \draw[blue, -,postaction={decorate,decoration={markings,mark=at position 0.5 with {\arrow[thick]{stealth};}}}] (dot20) to [out=215, in=-35] (dot10);
  \draw[red, -,postaction={decorate,decoration={markings,mark=at position 0.5 with {\arrow[thick]{stealth};}}}] (dot10) to[out=35, in=145] (dot20);

  \end{tikzpicture}
\end{minipage}%
\begin{minipage}{0.6\linewidth} 
  \centering
  \begin{tikzpicture}
  \node[ellipse, minimum width=1.8cm, minimum height=4.8cm, draw] (ellipse1) at (0,0) {};
  \node[circle, fill, inner sep=1pt] (dot11) at (0, 1) {};
  \node[circle, fill, inner sep=1pt] (dot1-1) at (0, -1) {};

  \draw[blue, -,postaction={decorate,decoration={markings,mark=at position 0.5 with {\arrow[thick]{stealth};}}}] (dot11) to[out=210, in=150, looseness=140] (dot11);
  
  \draw[blue, -,postaction={decorate,decoration={markings,mark=at position 0.5 with {\arrow[thick]{stealth};}}}] (dot11) to[out=200, in=160, looseness=110] (dot11);
 
  \draw[blue, -,postaction={decorate,decoration={markings,mark=at position 0.5 with {\arrow[thick]{stealth};}}}] (dot1-1) to[out=210, in=150, looseness=140] (dot1-1);
  
  \draw[blue, -,postaction={decorate,decoration={markings,mark=at position 0.5 with {\arrow[thick]{stealth};}}}] (dot1-1) to[out=200, in=160, looseness=110] (dot1-1);

  \end{tikzpicture}
\end{minipage}%

\caption{  { \it The first graph is before the edge contraction and the second graph is after the edge contraction. The red edges denote the edges along which the contraction takes place. The blue edges in the first graph form loops after the contraction.} }
\end{figure}

The formula for $m_{\Omega}$ and $m_{\widehat{\Omega}}$ assigns weights to vertices and edges of the graph. Because of the vertices on the second ellipse, we get $\phi_1(i_-)\tau_{i_-}\omega_{i_-}$ as additional weights on the vertices compared to the second graph.
The weights on the blue edges cancel out and we are left with the weights on the red edges which is precisely, $\phi_1(i_-)\tau_{i_+}\omega_{i_-}$. Since, $\tau_{i_+} = \tau_{i_-}$, we get $m_{\widehat \Omega}(\tau,\omega) - m_\Omega(\tau,\omega) = -2\tau_{i_-}\omega_{i_-} (\phi_1(i_-))$.

\subsection{Compatibility of $j_!\mu_\nu^*$ with the PBW basis} 
Let $\mcO$ be a $\gvF$- orbit in $\evF$. Let $P_{\mcO}: \gvF \backslash \evF \to \{0, 1\}$ be defined as $P_{\mcO}(\mcO) = 1$ and $0$ otherwise, i.e., it is the characteristic function of the orbit $\mcO$. Since, $\evF$ is finite, the number of orbits is also finite. Let $\epsilon = \{P_{\mcO} ~|~ \mcO \in  \gvF \backslash \evF\}$. 

Now, $P_{\mcO}$ induces a map $\zeta_{\mcO}: \evF \to \mathbb{Q}(q^{1/2})$ by defining $\zeta_\mcO(x) = P_{\mcO}([x])$ where $[x]$ denotes the $\gvF$-orbit of $x$. 
The set $\{\zeta_\mcO\}_{\mcO \in \gvF \backslash \evF}$ forms the PBW basis of $\hO$.\\

{\bf Remark:} Let $X$ and $Y$ be finite sets with a group action by $G$. A map on sets, say $\phi: X \to Y$ induces a well defined map on $\mu: G\backslash X \to G \backslash Y$ by defining $\mu([x]) = [\phi(x)]$ if $\phi$ preserves the group action, i.e., $\phi(g.x) = g.\phi(x)$.\\

Since $\mu_{\nu}: \evhF \to \evhatF$ defined by $\mu_{\nu}(x) = \widehat{x}$ is compatible with the group action $\gvF$, by the above remark, we have a bijection $\mu_{\nu}: \gvF \backslash \evhF \to \gvhatF \backslash \evhatF$ which is defined as $\mu_\nu([x]) = [\widehat{x}]$. Next we show that the map $j_!\mu_\nu^*$ preserves the PBW basis.\\

\begin{proposition}
    If $\mu_\nu(\mcO) = \mcO'$ then, $j_!\mu_\nu^*(P_{\mcO'}) = P_\mcO$.
\end{proposition}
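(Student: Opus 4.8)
The plan is to prove the identity of functions $j_!\mu_\nu^*(P_{\mcO'}) = P_\mcO$ by evaluating both sides at an arbitrary point $x \in \evF$. First I would spell out the (harmless) abuse of notation in the statement: $P_{\mcO'}$ denotes the element of $\hvOhat$ attached to the $\gvhatF$-orbit $\mcO'$, i.e. the function $\zeta_{\mcO'}$ on $\evhatF$ which is the indicator of $\mcO'$ viewed as a subset of $\evhatF$; likewise $P_\mcO = \zeta_\mcO$ is the indicator of $\mcO \subseteq \evF$. Since $\mu_\nu$ is only defined on $\evhF$, the hypothesis $\mu_\nu(\mcO) = \mcO'$ forces $\mcO$ to be a $\gvF$-orbit contained in $\evhF$.

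The only step requiring an actual argument is the set-theoretic identity $\mcO = \mu_\nu^{-1}(\mcO')$ inside $\evhF$. For this I would use the bijection $\mu_\nu : \gvF\backslash\evhF \xrightarrow{\sim} \gvhatF\backslash\evhatF$ established just above. The induced map on orbit sets carries the $\gvF$-orbit of $x$ to the $\gvhatF$-orbit of $\widehat x$, so the inclusion $\mcO \subseteq \mu_\nu^{-1}(\mcO')$ is immediate; conversely, if $y \in \evhF$ has $\widehat y \in \mcO'$, then the $\gvF$-orbit of $y$ and that of any $x \in \mcO$ are both sent by $\mu_\nu$ to $\mcO'$, hence coincide by injectivity, so $y \in \mcO$.

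Granting this, the computation is routine. If $x \in \evF \setminus \evhF$, both sides vanish: the left side by the definition of $j_{\nu!}$ as extension by zero, and the right side because $\mcO \subseteq \evhF$. If $x \in \evhF$, then
\[
j_{\nu!}\bigl(\mu_\nu^*(P_{\mcO'})\bigr)(x) \;=\; \mu_\nu^*(P_{\mcO'})(x) \;=\; P_{\mcO'}(\widehat x),
\]
which is $1$ when $\widehat x \in \mcO'$ and $0$ otherwise; and $\widehat x \in \mcO'$ holds precisely when $x \in \mu_\nu^{-1}(\mcO') = \mcO$, so the value equals $P_\mcO(x)$. Hence the two functions coincide on $\evF$. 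I would also note in passing that $j_{\nu!}\mu_\nu^*$ does produce a $\gvF$-invariant function, so that it genuinely lands in $\hvO$, which is automatic from the $\gvF$-equivariance of $\mu_\nu$ and $j_\nu$.

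The main, and essentially the only, obstacle is the identification $\mcO = \mu_\nu^{-1}(\mcO')$; once the previously established bijection on orbit sets is applied correctly, the rest is a direct unwinding of the definitions of pullback and extension by zero.
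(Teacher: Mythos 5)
Your proof is correct and follows essentially the same route as the paper's: pointwise evaluation of $j_{\nu!}\mu_\nu^*(P_{\mcO'})$, splitting into the cases $x \notin \evhF$ and $x \in \evhF$, and invoking the bijection $\mu_\nu: \gvF\backslash\evhF \to \gvhatF\backslash\evhatF$ established just before the proposition. If anything, you are slightly more careful than the paper, since you make explicit (via $\mcO = \mu_\nu^{-1}(\mcO')$ and injectivity on orbit sets) why the value vanishes when $x \in \evhF$ but $[x] \neq \mcO$, a step the paper's proof leaves implicit.
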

\begin{proof} Let $x \in \evF$ such that $[x] = \mcO$ and $x \in \evhF$. This implies $\mu_\nu([x]) = \mu_\nu(\mcO) = \mcO'$. The computation is worked out as follows:
\[
\begin{aligned}
 j_!\mu_\nu^*(P_{\mcO'})([x])& = j_!(\mu_\nu^*P_{\mcO'})([x])\\
    & = \ds\sum_{x^\heart \in j_{\nu}^{-1}([x])}\mu_\nu^*(P_{\mcO'})([x^\heart])\\
    & = P_{\mcO'}\mu_\nu([x])\\
    & = P_{\mcO'}(\mcO')\\
    & = 1\\
    & = P_{\mcO}(\mcO)\\
    & = P_{\mcO}([x]).
\end{aligned}
\]
If $x \in \evF$ such that either $x \notin \evhF$ or $[x] \neq \mcO$ then $j_!\mu_\nu^*(P_{\mcO'})([x]) = 0 = P_\mcO([x])$.\\
Therefore we have, $j_!\mu_\nu^*(P_{\mcO'}) = P_\mcO$.
\end{proof}

\subsection{Split short exact sequence of Hall Algebras}

We will show that $\hOhat$ is a split subquotient of $\hO$, which was proved in \cite{Li23}.

Recall the representation space with isomorphisms on the edges along which the contaction takes place, i.e., $\evhF$. Let $\evcF$ be its compliment in $\evF$. This space is $\gvF$-stable. Just like we constructed a Hall algebra associated to the representation spaces $\evF$ and $\evhF$, we do the same for $\evcF$ as well. 
Let $\hvOc = \{f: \evcF \to \mb{Q}(q^{1/2}) ~|~ f(g.x) = f(x) ~\forall g \in \gvF, x \in \evcF \}$. 
We define,
\[ \hOc = \ds\oplus_{\nu \in \mb{N}[\widehat{I}]} \hvOc  .
\]
We further define a new space
\[ \hOIhat = \ds\oplus_{\nu \in \mb{N}[\widehat{I}]} \hvO.
\]
By definition, we have
\[
\hOIhat = \hOh \oplus \hOc.
\]

\begin{lemma}
$\hOc$ is a two-sided ideal of $\hOIhat$.
\end{lemma}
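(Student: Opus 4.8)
The plan is to show that $\hOc$ is closed under multiplication by arbitrary elements of $\hOIhat = \hOh \oplus \hOc$ from both sides, where the multiplication on $\hOIhat$ is the one induced by the induction diagram for the contracted-type graph (the second row of \eqref{extinduction}, restricted appropriately to the $\evcF$ and $\evhF$ pieces). The key structural fact to exploit is that the representation space $\evF$ decomposes $\gvF$-stably as $\evhF \sqcup \evcF$, where $\evhF$ is the locus on which all the edges $h_k = a^k(e)$ act by isomorphisms and $\evcF$ is its complement, i.e. the locus on which at least one of those edge-maps fails to be an isomorphism. Since this is a disjoint union of $\gvF$-stable sets, $\hvO = \hvOh \oplus \hvOc$ as the splitting into functions supported on each piece, and dually $\hOIhat = \hOh \oplus \hOc$.

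First I would make precise that multiplication $f_1 \circ f_2$ in $\hOIhat$ is computed via $(p_3^\heart)_!(p_2^\heart)_!(p_1^\heart)^*$ (up to the scalar twist), with $p_1^\heart, p_2^\heart, p_3^\heart$ the maps in the middle row of \eqref{extinduction}. The essential point is a closure statement about the image of $p_3^\heart$: if $y$ is a point of $\bfE''^{\heart,\bfF}$ such that $p_3^\heart(y) = g.x$ lies in $\evhF$ (all distinguished edges act isomorphically), then both the sub-object $\bfU$ and the quotient $\bfV/\bfU$ must themselves lie in the ``$\heart$'' locus — an $x$-stable subspace and quotient of a vector space on which $x_{h_k}$ is an isomorphism inherit that property. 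Conversely, if $g.x \in \evcF$, then at least one of the sub/quotient pieces lies in the complement locus. This is exactly the observation ``$x_h$ is an isomorphism iff $x_h^{\bfT}$ and $x_h^{\bfW}$ are isomorphisms'' already used in the proof of Lemma 2.2.2 in the excerpt, applied here at the level of the induction diagram rather than the restriction diagram.

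Given that, the argument runs as follows. Take $f \in \hvOc$ (supported on $\evcF$) and $f' \in \hvOh$ or $\hvOc$ (supported anywhere). To evaluate $(f \circ f')(x)$ for $x \in \evhF$, one sums over the fibers of $p_3^\heart$ over the $\gvF$-orbit of $x$; but by the closure statement above every such point $y$ has $p_1^\heart(y) = (x^\bfT, x^\bfW)$ with both coordinates in the $\heart$ locus, so the factor coming from $f$ (which is supported on the complement locus) vanishes. Hence $f \circ f'$ is supported on $\evcF$, i.e. lies in $\hOc$. The same argument with the roles reversed — $f'$ on the right being the complement-supported function — gives closure under right multiplication, and by additivity this extends from $\hvOh, \hvOc$ to all of $\hOIhat$. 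Therefore $\hOc$ is a two-sided ideal.

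The main obstacle I anticipate is being careful about what the product on $\hOIhat$ actually is: the ambient algebra here is $\hOIhat = \oplus_\nu \hvO$ with $\nu$ ranging over $\mb N[\whI]$, which is \emph{not} the full Hall algebra $\hO$ but rather its restriction to dimension vectors supported on $\whI$, and the multiplication must be checked to be well-defined on this subspace (which it is, because the relevant $\bfT, \bfW$ also have dimension vectors in $\mb N[\whI]$). Once that is pinned down, the ideal property is really just the geometric statement that the property ``all distinguished edges act by isomorphisms'' passes to sub-objects and quotients and is detected on them jointly — everything else is bookkeeping with push-forwards and pullbacks of functions, which parallels \cite{Li23}, Proposition 2.4.1, verbatim.
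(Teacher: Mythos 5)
Your proposal is correct and follows essentially the same route as the paper: the key point in both is that, by the block upper-triangular form of $x$ with respect to $\bfV = \bfT \oplus \bfW$, the maps $x_{h_k}$ on the distinguished edges are isomorphisms if and only if both $x_{h_k}^{\bfT}$ and $x_{h_k}^{\bfW}$ are, so a product involving a factor supported on the complement locus stays supported on $\evcF$. The paper states this directly (a pair with one component in the complement induces $x \in \evcF$) while you argue the contrapositive (vanishing of the product on $\evhF$), and your closing caveat correctly pins down that the multiplication on $\hOIhat$ is the restriction of that of $\hO$ rather than the heart-level maps mentioned earlier in your sketch; these are only cosmetic differences.
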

\begin{proof} We need to show that if $f \in \hOc$ and $g \in \hOIhat$ then $f \circ f \in \hOc$. With loss of generality, assume that $f \in \hwO^c$ and $g \in \htO$ where $\omega, \tau \in \whI$. So it is sufficient to prove that $f \circ g \in {\bf H}_{\tau + \omega, \Omega}^c$, i.e., to show that $f \circ g$ is a map on $\evcF$ where the dimension vector of $V = \nu = \tau + \omega$. 
Consider $(x', x'') \in \ewcF \times \etcF$ where dim$({\bfW}) = \omega$ and dim$({\bfT}) = \tau$. Suppose that one of the components in the pair $(x', x'')$ is either in $\etcF$ or $\ewcF$. Suppose $x \in \evF$, such that $x = p_3p_2p_1^{-1}(x', x'')$. Since $V = W \oplus T$, we have $x = \begin{bmatrix}
\begin{array}{c|c}
x' & x^{{\bf T} \to \bfW} \\ \hline
0 & x'' \\ 
\end{array}
\end{bmatrix}$. Therefore, $x$ is invertible iff $x'$ and $x''$ is invertible. This shows that $x \in \evcF$. Hence, $\hOc$ is a two-sided ideal of $\hOIhat$.
\end{proof}

Note that the map $j_\nu^*: \hvO \to \hvOh$ is a restriction map and is also surjective by extension by zero. Taking direct sum over $\nu \in \widehat{I}$, we define a new map,
\[
j^* = \ds\oplus_{\nu \in \whI} ~j_\nu^*: \hOIhat \to \hOh.
\]

\begin{theorem}
    We have the following split short exact sequence of Hall algebras
   \[
\begin{tikzcd}[row sep=small, column sep=large]
    0 \arrow[r] 
      & \hOc \arrow[r] 
      & \hOIhat \arrow[r, "j^*", shift left=2pt]
      & \hOh \arrow[r] 
        \arrow[l, "j_!", shift left=2 pt]
      & 0
\end{tikzcd}
\]
Furthermore, $\hOhat$ is a split subquotient of $\hO$, and $\hOhat \cong \hOIhat / \hOc$. \label{split subquotient}
\end{theorem}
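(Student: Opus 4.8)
The plan is to recognize the claimed sequence as a formal consequence of three facts already available: that $j_!$ is an injective algebra homomorphism, that $\hOc$ is a two-sided ideal of $\hOIhat$ (the Lemma immediately preceding the theorem), and that $\mu^\star$ is an algebra isomorphism (Theorem~\ref{main embedding} together with the observation that $\mu^\star$ is $(q^{-1/2})^{\nu_{i_-}^2\phi_1(i_-)}$ times the vector-space isomorphism $\mu_\nu^*$). This mirrors \cite{Li23}, Proposition~2.4.1. First I would check that $\hOIhat$ is a subalgebra of $\hO$: under the embedding $\mb N[\whI]\hookrightarrow\mb N[I]$ sending $i_0\mapsto i_++i_-$ the image is exactly $\{\nu\in\mb N[I]\ :\ \nu_{i_+}=\nu_{i_-}\}$, a submonoid of $\mb N[I]$, so $\mathrm{Ind}^{\nu}_{\tau,\omega}$ with $\tau,\omega$ in this image lands in a component indexed by the image, and $\hOIhat=\bigoplus_{\nu\in\mb N[\whI]}\hvO$ is a unital subalgebra of $\hO$. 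For each such $\nu$ the $\gvF$-stable decomposition $\evF=\evhF\sqcup\evcF$ gives $\hvO=\hvOh\oplus\hvOc$, where $\hvOh$ is identified with the $\gvF$-invariant functions on $\evF$ supported on $\evhF$; summing over $\nu$ yields $\hOIhat=\hOh\oplus\hOc$. With this identification $j_!$ is the inclusion of the summand $\hOh$ and $j^*$ is the projection onto $\hOh$ along $\hOc$, so $j^*\circ j_!=\mathrm{id}_{\hOh}$ and $\ker j^*=\hOc$.

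\textbf{Step: $j^*$ is an algebra homomorphism, and the sequence is split exact.} We have already shown $j_!:\hOh\to\hO$ is an algebra homomorphism; being graded it factors through $\hOIhat$, so $j_!:\hOh\to\hOIhat$ is an injective algebra map, and $\hOc$ is a two-sided ideal of $\hOIhat$ by the preceding Lemma. For $a,b\in\hOIhat$ write $a=j_!(a')+a_c$, $b=j_!(b')+b_c$ with $a',b'\in\hOh$ and $a_c,b_c\in\hOc$. Expanding the product $a\circ b$ in $\hO$ and using that $\hOc$ is a two-sided ideal, the three cross terms lie in $\hOc$, so $a\circ b\equiv j_!(a')\circ j_!(b')=j_!(a'\circ b')\pmod{\hOc}$, the last equality because $j_!$ is multiplicative (here $a'\circ b'$ denotes the product in $\hOh$). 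Applying $j^*$ and using $j^*\circ j_!=\mathrm{id}$ and $j^*(\hOc)=0$ gives $j^*(a\circ b)=a'\circ b'=j^*(a)\circ j^*(b)$. Thus $j^*$ is an algebra homomorphism; it is surjective, its kernel is $\hOc$, the inclusion $\hOc\hookrightarrow\hOIhat$ is an (ideal) map, and $j_!$ is an algebra section of $j^*$. Hence the displayed sequence is a split short exact sequence of Hall algebras, and $j^*$ induces an algebra isomorphism $\hOIhat/\hOc\xrightarrow{\ \sim\ }\hOh$.

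\textbf{Step: the split subquotient description.} The map $\mu^\star:\hOhat\to\hOh$ is an algebra homomorphism by Theorem~\ref{main embedding} and a bijection, hence an algebra isomorphism; composing with the isomorphism $\hOh\cong\hOIhat/\hOc$ of the previous step gives $\hOhat\cong\hOIhat/\hOc$. Since $\hOIhat$ is a subalgebra of $\hO$ and $\hOc$ a two-sided ideal of it, $\hOhat$ is a subquotient of $\hO$; it is split because the quotient map $j^*$ is split by $j_!$ — equivalently, $j_!\circ\mu^\star:\hOhat\to\hOIhat\subset\hO$ is an algebra section of the composite $\hOIhat\xrightarrow{j^*}\hOh\xrightarrow{\mu_\star}\hOhat$. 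This gives both assertions of the theorem.

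\textbf{Expected main obstacle.} There is no deep new difficulty here; once the three inputs are in place the argument is formal. The only point requiring care is the bookkeeping in the first step: verifying that the embedding $\mb N[\whI]\hookrightarrow\mb N[I]$ makes $\hOIhat$ a subalgebra, that the summands $\hOh$ and $\hOc$ sit inside it compatibly with the multiplications, and that $j_!$ and $j^*$ are genuinely mutually splitting algebra maps. This is exactly what \cite{Li23}, Proposition~2.4.1, carries out in the loop-free case, and the presence of loops after contraction affects none of this reasoning, so it transfers verbatim.
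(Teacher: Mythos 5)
Your proposal is correct and follows essentially the same route as the paper, which simply defers to \cite{Li23}, Proposition 2.4.1: the decomposition $\hOIhat = \hOh \oplus \hOc$, the preceding lemma that $\hOc$ is a two-sided ideal, the already-established facts that $j_!$ is an algebra homomorphism with $j^*\circ j_! = \mathrm{id}$, and the isomorphism $\mu^\star:\hOhat \to \hOh$ together give the split exact sequence and $\hOhat \cong \hOIhat/\hOc$. Your writeup just makes explicit the bookkeeping (e.g.\ that $\mb{N}[\whI]$ embeds as the submonoid $\{\nu : \nu_{i_+}=\nu_{i_-}\}$, so $\hOIhat$ is a subalgebra of $\hO$) that the paper leaves to the citation.
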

\begin{proof} The proof is identical to the proof in \cite{Li23} page 17 proposition 2.4.1.
\end{proof}

\vspace{2 cm}

\end{document}